\crefname{equation}{}{}
\crefname{lemma}{Lemma}{Lemmas}
\crefname{theorem}{Theorem}{Theorems}
\crefname{discr}{Discretization}{Discretizations}
\crefname{assumption}{Assumption}{Assumptions}
\numberwithin{equation}{section}
\apptocmd{\sloppy}{\hbadness 10000\relax}{}{}
\newcommand{\ssnm}[1]
{
  \left\vert\kern-0.25ex
  \left\vert\kern-0.25ex
  \left\vert
  {#1}
  \right\vert\kern-0.25ex
  \right\vert\kern-0.25ex
  \right\vert
}
\def\spher@harm#1{%
  \vbox{\hbox{%
    \offinterlineskip
    \valign{&\hb@xt@2\p@{\hss$##$\hss}\vskip.2ex\cr#1\crcr}%
  }\vskip-.36ex}%
}
\def\gshone{\spher@harm{.}}
\def\gshtwo{\spher@harm{.&.}}
\def\gshthree{\spher@harm{.&.&.}}
\let\gsh\spher@harm
\newtheorem{lemma}{Lemma}[section]
\newtheorem{remark}{Remark}[section]
\newtheorem{theorem}{Theorem}[section]
\newtheorem{example}{Example}[section]
\def\@captype{table}\makeatother
\begin{document}
	
\title{
  \Large\bf A fast fully discrete mixed finite element  scheme for fractional viscoelastic models of wave propagation\thanks
  {
    This work was supported in part by National Natural Science Foundation
    of China (12171340).
  }
}
\author{
  Hao Yuan \thanks{Email:  787023127@qq.com},
  Xiaoping Xie \thanks{Corresponding author. Email: xpxie@scu.edu.cn} \\
  {School of Mathematics, Sichuan University, Chengdu 610064, China}
}

\date{} 
\maketitle

\begin{abstract}

Due to the nonlocal feature of fractional differential operators,  the numerical solution to  fractional partial differential equations usually requires  expensive  memory and computation    costs. 
This paper  develops  a fast scheme for   fractional viscoelastic models of wave propagation. 
 We first apply the Laplace transform to convert  the time-fractional constitutive equation into an integro-differential form that involves   the Mittag-Leffler function as a convolution kernel. Then we construct   an efficient sum-of-exponentials (SOE) approximation for the Mittag-Leffler function.  We use mixed finite elements for the spatial discretization and the Newmark scheme for the temporal discretization of the    second  time-derivative of the displacement variable in the kinematical equation and finally obtain the fast algorithm.      Compared with the traditional L1 scheme for time fractional derivative, our fast scheme reduces the memory complexity from  $\mathcal O(N_sN) $ to $\mathcal O(N_sN_{exp})$ and the computation complexity from $\mathcal O(N_sN^2)$ to $\mathcal O(N_sN_{exp}N)$, where $N$ denotes   the total number of temporal grid points, $N_{exp}$   the number of exponentials in SOE, and    $N_s$      the complexity of memory and computation related to the spatial discretization.   
Numerical experiments confirm the theoretical results.  

 \end{abstract}

\medskip\noindent{\bf Keywords:} Fractional   viscoelastic model;   wave propagation;  Mittag-Leffler function;  
sum-of-exponentials approximation; Caputo derivative; fast scheme

\section{Introduction}
Assume that  $\Omega\subset \mathbb{R}^d$($d=2$ and $3$) is a bounded open domain with boundary $\partial\Omega$,     $T>0$ is the time length, and $\alpha\in (0,1)$ is a constant. 
Consider the following fractional viscoelastic model of wave propagation:
\begin{equation}\label{model}
	\left\{
	\begin{array}{ll}
		\rho \textbf{u}_{tt}-\mathrm{\textbf{div}}\sigma=\textbf{f}, & (x,t)\in\Omega\times[0,T], \\
		\sigma+\tau_{\sigma}^{\alpha}\frac{\partial^{\alpha}\sigma}{\partial t^{\alpha}}=\mathbb{D}(\varepsilon(\textbf{u})+\tau_{\varepsilon}^{\alpha}\frac{\partial^{\alpha}\varepsilon(\textbf{u})}{\partial t^{\alpha}}), &(x,t)\in\Omega\times[0,T],\\
		\textbf{u}=0, &  (x,t)\in\partial\Omega\times[0,T], \\
		\textbf{u}(x,0)=\textbf{u}_0,\textbf{u}_t(x,0)=\textbf{v}_0,\sigma(x,0)=\sigma_0,& x\in\Omega.
	\end{array}
	\right.
\end{equation}
Here  $\textbf{u}=(u_1,...,u_d)^T$ is the displacement field, $\sigma=(\sigma_{ij})_{d\times d}$ the symmetric stress tensor, $\textbf{div}\sigma=(\sum\limits_{i=1}^d \partial_i\sigma_{i1}, \cdots, \sum\limits_{i=1}^d \partial_i\sigma_{id} )^T$, $\varepsilon(\textbf{u})=(\bigtriangledown \textbf{u}+(\bigtriangledown \textbf{u})^{\mathrm{T}})/2$ the strain tensor,   $\tau_{\sigma}$ the relaxation time,   $\tau_{\varepsilon}$ the retardation time, $\rho(x)$   the mass density,  and  $\mathbb{D}$ the fourth order symmetric tensor. $\textbf{f}=(f_1,...,f_d)$ is the body force and  $\textbf{u}_0(x),\ \textbf{v}_0(x),\ \sigma_0(x)$ are initial data.  For any function $\textbf{v}(x,t)$,  denote $\textbf{v}_{t}:=\partial \textbf{v}/\partial t $ and  $\textbf{v}_{tt}:=\partial^2 \textbf{v}/\partial t^2 $, and for $0<\alpha<1$,  let $\frac{\partial^{\alpha} \textbf{v}}{\partial t^{\alpha}}$ be the $\alpha$-order  Caputo fractional derivative of $\textbf{v}(x,t)$    defined by
\begin{align}
	\frac{\partial^{\alpha} \textbf{v}}{\partial t^{\alpha}}(x,t)=\frac{1}{\Gamma(1-\alpha)}\int_{0}^{t}\frac{\textbf{v}_t(x,s)}{(t-s)^{\alpha}}ds.
\end{align}
We note that the following three   classical viscoelastic models correspond to different  choices of the  relaxation/retardation time in the  constitutive (second)  equation of  \eqref{model} with  $\alpha=1$: the Kelvin-Voigt model ($\tau_{\sigma}=0$, $\tau_{\varepsilon} >0$); the Maxwell model ($\tau_{\sigma}> 0$, $\tau_{\varepsilon}=0$) and the Zener model ($\tau_{\sigma}>0$, $\tau_{\varepsilon}>0$).

Many materials   display  elastic and viscous
kinematic   behaviours simultaneously. Such a feature, called viscoelasticity,  is commonly characterized by using springs, which obey the Hooke's law, and viscous dashpots, which obey the Newton's law.  Different combinations of the springs and dashpots lead to various viscoelastic models, e.g. the  Zener model, the Kelvin-Voigt model and the Maxwell model. We refer the reader to \cite{1960Bland,2007Dill,1998Drozdov,  Fung1966International,1988Boundary,1962Gurtin,2000Nonlinear,Marques2012Computational} for several monographs on the development  and application  of  the viscoelasticity theory. 

In recent decades,  fractional order differential operators, as   extension of integer order ones, have been widely used in many scientific and engineering fields such as physics, chemistry, materials science,  biology, finance and other sciences, due to their ability to accurately describe states or development processes with memory and hereditary   characteristics.  As far as  the viscoelastic materials with complex rheological properties are concerned,  more and more studies indicate that, comparing with the  integer order models,    time fractional viscoelastic models can  more precisely characterize   the creep and relaxation  dynamic behaviours and capture the effects of "fading" memory \cite{gemant1936method,gemant1950frictional,blair1944analytical,rabotnov1970creep,caputo1971linear,caputo1971new,bagley1981fractional,bagley1983theoretical,mainardi2022fractional}.

%
%

There are some works in the literature on the numerical analysis  of time fractional viscoelastic models. In \cite{enelund1997time} Enelund and Josefson  rewrote the constitutive equation of fractional Zener model (Riemann Liouville type) as an integro-differential equation with a weakly singular convolution kernel by Laplace transform and carried out finite element simulation. Based on the integro-differential form of constitutive equation from \cite{enelund1997time},  Adolfsson et al. \cite{adolfsson2004adaptive} proposed a piecewise constant discontinuous Galerkin method for a fractional order (Riemann Liouville type) viscoelastic differential equation. Subsequently, they applied  a discontinuous Galerkin method in time and a continuous Galerkin finite element method in space to discretize the quasi-static fractional viscoelastic model  \cite{adolfsson2008space}.  In \cite{yu2016fractional} Yu et al.  adopted finite element simulation for a fractional Zener model (Riemann Liouville type) with integro-differential form of constitutive equation in 3D cerebral arteries and aneurysms. Lam et al. \cite{lam2020exponential} presented a finite element scheme for 1D fractional Zener model (Caputo type) with integro-differential form of constitutive equation. In \cite{2024Liu-Xie} Liu and Xie proposed   a semi-discrete hybrid stress finite element method for a  time fractional viscoelastic model, where the corresponding integro-differential equation is of a Mittag-Leffler type convolution kernel, and derived error estimate for the semi-discrete scheme.


The nonlocal feature of fractional differential operators usually means  expensive  computational cost and memory cost in the  numerical simulation of fractional models. To tackle such difficulties, Lubich and Sch{\"a}dle \cite{lubich2002fast} proposed a new algorithm for the evaluation of convolution integral when  solving  wave propagation problems. The algorithm is based on local SOE approximation for the inverse Laplace transform of kernel function by applying trapezoidal rule to the contour integral. Li \cite{li2010fast} presented  a locally SOE approximation for the integral representation of the kernel function by using an efficient $Q$-point Gauss–Legendre quadrature.  Yu et al. \cite{yu2016fractional} considered an SOE approximation of Mittag-Leffler function by applying trapezoidal rule to the contour integral and applied it to the fractional Zener model. Jiang et al. \cite{jiang2017fast} and Yan et al. \cite{yan2017fast} split the convolution integral in the Caputo fractional derivative into a local part and  a history part, and presented   fast algorithms for time fractional diffusion equations  by adopting the SOE approximation (using Gauss-Jacobi quadrature and Gauss-Legendre quadrature) for the history part and L1 (L2-$1_\sigma$) formula for the local part. Baffet \cite{baffet2019gauss} divided the fractional integral of a function $f$ into a history term (convolution of the history of $f$ and a regular kernel) and a local term, and gave a method for fractional differential equations by using SOE approximation (by Gauss-Jacobi quadrature) for the history part and an implicit scheme for the local part. Zeng et al. \cite{zeng2018stable} developed a unified fast time-stepping method for both fractional integral and derivative operators by using truncated Laguerre-Gauss   quadrature for the kernel function in history part and a direct convolution method for local part.  In \cite{lam2020exponential} Lam et al. gave an SOE approximation (by Gauss-Legendre quadrature) for the integral representation of Mittag-Leffler function and applied it to a 1D fractional Zener model. 
We refer to \cite{bai2022numerical,baffet2017kernel,chen2019accurate,fang2020fast,2022Huang,jia2023fast,wang2018fast, yin2021class, zhang2022exponential} for some other fast algorithms for time fractional order PDEs. 

In this paper, we present an efficient numerical scheme for solving the fractional viscoelastic model \cref{model}. Our contribution lies in the following aspects.
\begin{itemize}
	\item The constitutive equation of model \cref{model} is converted to an integro-differential form with Mittag-Leffler function as the convolution kernel.
	\item An efficient SOE approximation (different from that of \cite{lam2020exponential}) is proposed for the Mittag-Leffler function and applied to accelerate the evaluation of the convolution. For a given tolerance error $\epsilon$ of the proposed SOE approximation, its  computation  complexity     is   $N_{exp}=\mathcal{O}(|\log\epsilon|^2)$. 
	\item An estimate of the truncation error of the SOE approximation is derived. We note that there is no truncation error estimation in \cite{lam2020exponential}.
	\item The proposed SOE approximation is applied to the fractional viscoelastic model to get a fast numerical scheme.
	\item The resulting fast scheme requires $\mathcal{O}(N_sN_{exp})$ memory complexity and $\mathcal{O}(N_s N_{exp}N)$ computation complexity,  in contrast to  $\mathcal{O}(N_sN)$  and  $\mathcal{O}(N_sN^2)$   for the traditional L1 scheme. Here $N$ denotes   the total number of temporal grid points and    $N_s$
   represents    the complexity of memory and computation related to the spatial discretization. In particular, if    the tolerance error of  the SOE approximation is taken as $\epsilon=\triangle t=T/N$,   we will have 
	$
		N_{exp}  =\mathcal{O}(\log^2N)
	$(cf. \cref{rmk2.4}).

\end{itemize}

The rest of this paper is arranged as follows. Section 2 introduces some preliminaries on the SOE approximation of Mittag-Leffler function. Section 3 gives two numerical schemes:  the   L1-Newmark scheme and the fast scheme with the SOE approximation. Finally, numerical examples are provided in Section 4 to verify the performance of the SOE approximation and the  fast scheme.

\section{Preliminary results}


\subsection{Alternative form of  the constitutive law and weak formulations}

We note that  the constitutive equation  in the model \cref{model}     is of the following   differential form:
\begin{equation}\label{constitutive}
	\sigma+\tau_{\sigma}^{\alpha}\frac{\partial^{\alpha}\sigma}{\partial t^{\alpha}}=\mathbb{D}(\varepsilon(\textbf{u})+\tau_{\varepsilon}^{\alpha}\frac{\partial^{\alpha}\varepsilon(\textbf{u})}{\partial t^{\alpha}}).
\end{equation}
  In this subsection we shall 
 convert it  to  an explicit expression of  $\sigma$ 
 when $\tau_{\sigma}\neq 0$. To this end, we first   introduce two basic tools: the Laplace transform and the Mittag-Leffler function.

Let $f(t)$ be a function defined in $\mathbb{R}^{+}$. The Laplace transform of $f(t)$ is defined by
\begin{equation}
	\hat{f}(s):={\mathcal L}\bigl( f(t) \,;\, s\bigr) =\int_{0}^{\infty}f(t)e^{-st}dt,
	\nonumber
\end{equation}
where $s\in\mathbb{C}$ and Re $s\geq0$. There holds the following property of the Laplace transform for the Caputo fractional derivative \cite{Podlubny1999,Diethelm2010}:
\begin{equation}\label{Laplace-property}
	\mathcal{L}\bigl( \frac{\partial^{\alpha} f}{\partial t^{\alpha}}(t) \,;\, s\bigr)=s^{\alpha}\hat{f}(s)-s^{\alpha-1}f(0), \qquad \alpha\in(0,1).
\end{equation}



 For $\alpha>0$, and $\beta\in\mathbb{R}$, the two-parameter Mittag-Leffler function is defined by 
\begin{equation*}
	E_{\alpha,\beta}(z):=\sum_{j=0}^{\infty}\frac{z^j}{\Gamma(j\alpha+\beta)},\quad z\in\mathbb{C}.
\end{equation*}
In particular, the one-parameter Mittag-Leffler function is given by
 $$E_{\alpha}(z):=E_{\alpha,1}(z)=\sum_{j=0}^{\infty}\frac{z^j}{\Gamma(j\alpha+1)}.$$
There hold the  following properties (cf. \cite{Diethelm2010,BangtiJin2021}):
\begin{lemma}
	(1) For $\alpha$, $\beta>0$ and $z\in\mathbb{C}$, there holds
	\begin{equation}\label{th2.2}
		E_{\alpha,\beta}(z)=zE_{\alpha,\alpha+\beta}(z)+\frac{1}{\Gamma(\beta)};
	\end{equation}
	(2) For $\alpha$, $\beta>0$, $\lambda\geq0$ and $t>0$, there holds
		\begin{equation}\label{LaplaceMLF}
			\begin{array}{ll}
				{\mathcal L}\bigl( t^{\beta-1}E_{\alpha,\beta}(-\lambda t^{\alpha}) \,;\, s\bigr)=\frac{s^{\alpha-\beta}}{s^\alpha+\lambda} \quad \bigl(Re \, s\geq0\bigr).
			\end{array}
		\end{equation}
\end{lemma}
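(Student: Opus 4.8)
The plan is to derive both identities directly from the defining power series of the Mittag-Leffler function, handling the two parts in turn.

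For part (1), I would start from $E_{\alpha,\beta}(z)=\sum_{j=0}^{\infty} z^j/\Gamma(j\alpha+\beta)$ and work on the right-hand side term $z\,E_{\alpha,\alpha+\beta}(z)$. Multiplying the series for $E_{\alpha,\alpha+\beta}$ by $z$ and shifting the summation index via $k=j+1$ converts it into $\sum_{k=1}^{\infty} z^k/\Gamma(k\alpha+\beta)$, which is exactly the series for $E_{\alpha,\beta}(z)$ with its $k=0$ term deleted. Since that missing term equals $1/\Gamma(\beta)$, adding it back reconstitutes the full series and gives the claimed recurrence. Because the Mittag-Leffler series is entire in $z$ for $\alpha>0$ (infinite radius of convergence), the reindexing is legitimate term by term and no convergence subtlety arises.

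For part (2), my approach would be to expand $t^{\beta-1}E_{\alpha,\beta}(-\lambda t^{\alpha})$ as $\sum_{j=0}^{\infty}(-\lambda)^j t^{j\alpha+\beta-1}/\Gamma(j\alpha+\beta)$ and apply the Laplace transform termwise. Invoking the elementary transform $\mathcal{L}(t^{\mu-1};s)=\Gamma(\mu)/s^{\mu}$ with $\mu=j\alpha+\beta$, the factors $\Gamma(j\alpha+\beta)$ cancel, leaving $\sum_{j=0}^{\infty}(-\lambda)^j s^{-(j\alpha+\beta)}=s^{-\beta}\sum_{j=0}^{\infty}(-\lambda s^{-\alpha})^j$. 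Summing the geometric series as $s^{-\beta}/(1+\lambda s^{-\alpha})=s^{\alpha-\beta}/(s^{\alpha}+\lambda)$ then produces the stated formula. I would streamline the write-up by quoting the standard Laplace transform of the power function rather than recomputing the underlying Gamma integral.

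The delicate step, and the one I expect to be the main obstacle, is justifying the interchange of summation and integration in part (2). I would secure it by checking absolute convergence: for $s$ with $|s^{\alpha}|>\lambda$ (for instance $\mathrm{Re}\,s$ sufficiently large), the series of Laplace integrals of the absolute values converges, so Fubini or dominated convergence licenses the swap and simultaneously guarantees convergence of the geometric series. Once the identity is established on this region, both sides are analytic in $s$ wherever $s^{\alpha}\neq-\lambda$, so analytic continuation extends the equality to the full range $\mathrm{Re}\,s\geq 0$ asserted in the lemma (away from the singular set). This continuation argument is what upgrades the naive termwise computation, valid only on a half-plane, into the stated result.
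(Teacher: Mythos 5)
Your proof is correct. Note that the paper itself offers no proof of this lemma at all---it simply cites \cite{Diethelm2010,BangtiJin2021}---and the argument you give (reindexing the defining series for part (1); termwise Laplace transform of the series, summation of the resulting geometric series on a half-plane where $|s^{\alpha}|>\lambda$, and analytic continuation for part (2)) is precisely the standard derivation found in those references, so there is nothing to compare against and no gap to report.
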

It has been shown in \cite{2014Rogosin,meng2018green} that the following integral identity for the Mittag-Leffler function of $-t^{\alpha}$  holds   for $t>0$ and  $0<\alpha<1$: 
\begin{equation}\label{identityMLF}
		\begin{aligned}
			E_{\alpha}(-t^{\alpha})&=\frac{\sin(\alpha\pi)}{\pi}\int_{0}^{\infty}\frac{s^{\alpha-1}}{s^{2\alpha}+2s^\alpha\cos\alpha\pi+1}e^{-st}ds. 
		\end{aligned}
	\end{equation}

%
%

We are now at a position to derive the explicit expression of  $\sigma$ from the the constitutive equation \eqref{constitutive}. To begin with, we Laplace-transform   \eqref{constitutive} 
and apply \eqref{Laplace-property} to obtain
\begin{equation}
	\mathbb{D}^{-1}\left(\hat{\sigma}+\tau_{\sigma}^{\alpha}(s^{\alpha}\hat{\sigma}-s^{\alpha-1}\sigma_0)\right)=\varepsilon(\hat{\textbf{u}})+\tau_{\varepsilon}^{\alpha}(s^\alpha\varepsilon(\hat{\textbf{u}})-s^{\alpha-1}\varepsilon(\textbf{u}_0)),
\end{equation}
which yields
\begin{equation}\label{Laconstitutive}
	\begin{aligned}
		\mathbb{D}^{-1}\hat{\sigma}=&\frac{1+(\tau_{\varepsilon}s)^\alpha}{1+(\tau_{\sigma}s)^{\alpha}}\varepsilon(\hat{\textbf{u}})+\frac{s^{\alpha-1}}{1+(\tau_{\sigma}s)^{\alpha}}\left(\tau_{\sigma}^\alpha \mathbb{D}^{-1}\sigma_0+\tau_{\varepsilon}^\alpha\varepsilon(\textbf{u}_0)\right) \\
		=&(\frac{\tau_{\varepsilon}}{\tau_{\sigma}})^\alpha\cdot\frac{s^{\alpha-1}}{(\tau_{\sigma})^{-\alpha}+s^\alpha}\left(s\varepsilon(\hat{\textbf{u}})-\varepsilon(\textbf{u}_0)\right)+\frac{1}{\tau_{\sigma}^\alpha}\cdot\frac{s^{-1}}{(\tau_{\sigma})^{-\alpha}+s^\alpha}\left(s\varepsilon(\hat{\textbf{u}})-\varepsilon(\textbf{u}_0)\right)\\
		&+\frac{s^{\alpha-1}}{(\tau_{\sigma})^{-\alpha}+s^\alpha}\mathbb{D}^{-1}\sigma_0+\frac{1}{\tau_{\sigma}^\alpha}\cdot\frac{s^{-1}}{(\tau_{\sigma})^{-\alpha}+s^\alpha}\varepsilon(\textbf{u}_0).\\
	\end{aligned}
\end{equation}
Applying classical convolution theorem in Laplace transform \cite{Podlubny1999,Diethelm2010}, \cref{th2.2}, \cref{LaplaceMLF} and the inverse-Laplace-transform, we  finally  get the explicit expression
\begin{equation}\label{sigma}
	\begin{aligned}
		\mathbb{D}^{-1}\sigma=&\left((\frac{\tau_{\varepsilon}}{\tau_{\sigma}})^\alpha-1\right)\int_{0}^{t}E_{\alpha}(-(\frac{t-\tau}{\tau_{\sigma}})^\alpha)\varepsilon\bigl(\textbf{u}_t(\tau)\bigr)d\tau+\varepsilon(\textbf{u}_t)\\
		&+E_{\alpha}(-(\frac{t}{\tau_{\sigma}})^\alpha)(\mathbb{D}^{-1}\sigma_0-\varepsilon(\textbf{u}_0)).\\
	\end{aligned}
\end{equation}

In what follows we shall give a weak problem of  \cref{model} based on the alternative constitutive relation \cref{sigma}. 

Let ${L}^2(\Omega)$ be the space of  square integrable functions defined on $\Omega$, and let $\uline{L}^2(\Omega)$ and $\uuline{L}^2(\Omega)$ be its   vector and tensor analogues. We use $\langle \cdot \,,\, \cdot \rangle$ to denote the inner product on these three spaces. Define   
\begin{equation}
	\uuline{\mathrm{H}}(\mathrm{\textbf{div}},\Omega,S):=\{\chi=(\chi_{ij})_{d\times d}\in  \uuline{L}^2(\Omega)| \ \chi_{ij}=\chi_{ji}, \ \textbf{div}\chi \in \underline{L}^2(\Omega)\}.
	\nonumber
\end{equation}

In light of   \cref{sigma}, we have the following weak formulation for \cref{model}: Find $ \sigma\in \uuline{\mathrm{H}}(\mathrm{\textbf{div}},\Omega,S)$ and $\textbf{u}\in \uline{L}^2(\Omega)$ such that 
\begin{equation}\label{weak-form}
	\left\{
	\begin{array}{ll}
		\langle \rho\textbf{u}_{tt} \,,\, \textbf{v} \rangle-\langle \mathrm{\textbf{div}}\sigma \,,\, \textbf{v} \rangle=\langle \textbf{f} \,,\, \textbf{v} \rangle,\quad \forall\, \textbf{v}\in\uline{L}^2(\Omega),\\
		\langle \mathbb{D}^{-1}\sigma \,,\,\chi  \rangle+\big((\frac{\tau_{\varepsilon}}{\tau_{\sigma}})^\alpha-1\big)\int_{0}^{t}E_{\alpha}(-(\frac{t-\tau}{\tau_{\sigma}})^\alpha)\langle\mathrm{\textbf{div}}\chi \,,\,\textbf{u}_{t}(\tau)\rangle d\tau+\langle\mathrm{\textbf{div}}\chi \,,\,\textbf{u}_t\rangle \\
		=E_{\alpha}(-(\frac{t}{\tau_{\sigma}})^\alpha)\big(\langle\mathbb{D}^{-1}\sigma_0 \,,\,\chi\rangle+
		\langle\mathrm{\textbf{div}}\chi \,,\, \textbf{u}_0\rangle\big), \quad \forall\, \chi\in\uuline{\mathrm{H}}(\mathrm{\textbf{div}},\Omega,S),\\
		\textbf{u}(x,0)=\textbf{u}_0,\ \textbf{u}_t(x,0)=\textbf{v}_0.
	\end{array}
	\right.
\end{equation}

\begin{remark}
From the original 	model \cref{model}, we easily have the following weak formulation: Find $ \sigma\in \uuline{\mathrm{H}}(\mathrm{\bf{div}},\Omega,S)$ and ${\bf{u}}\in \uline{L}^2(\Omega)$ such that
{\rm\begin{equation}\label{weakformoftraditon}
	\left\{
	\begin{array}{ll}
		\langle\rho \textbf{u}_{tt} \,,\,\textbf{v}\rangle-\langle\mathrm{\bf{div}}\sigma \,,\,\textbf{v}\rangle=\langle\textbf{f} \,,\,\textbf{v}\rangle,\quad \forall\, {\textbf{v}}\in\uline{L}^2(\Omega),\\
		\langle\mathbb{D}^{-1}\sigma \,,\,\chi\rangle+\tau_{\sigma}^{\alpha}\langle\frac{\partial^{\alpha}\mathbb{D}^{-1}\sigma}{\partial t^{\alpha}} \,,\,\chi\rangle+\langle\mathrm{\bf{div}}\chi \,,\,\textbf{u}\rangle+\tau_{\varepsilon}^{\alpha}\langle\mathrm{\bf{div}}\chi \,,\,\frac{\partial^{\alpha}\textbf{u}}{\partial t^{\alpha}}\rangle=0, \  \forall\, \chi\in\uuline{\mathrm{H}}(\mathrm{\bf{div}},\Omega,S),\\
		\textbf{u} (x,0)=\textbf{u}_0,\ \textbf{u}_t(x,0)=\textbf{v}_0,\ \sigma(x,0)=\sigma_0.
	\end{array}
	\right.
\end{equation}}

	\end{remark}

%

\subsection{Efficient SOE approximation of Mittag-Leffler function}

Notice that  there is a   term  $E_{\alpha}(-(\frac{t-\tau}{\tau_{\sigma}})^\alpha)$  involved in in the weak formulation \eqref{weak-form}. As the Mittag-Leffler function is an infinite  series, how to compute such a term efficiently is crucial  to the design of fast algorithm for the fractional viscoelastic model \eqref{model}.
 
 In this   section we aim to construct an 
efficient sum-of-exponentials approximation of the Mittag-Leffler function $E_{\alpha}(-t^\alpha)$ based on the Gaussian quadrature   rule. .

\subsubsection{Gaussian quadrature approximation}\label{Gausssec}

	For a constant $l>1$, let $g(z)$ be  a function of one complex variable which is meromorphic in an open set containing the closure $\overline{B(l)}$ of the disc 
	\begin{equation}
		B(l)=\left\{z\in\mathbb{C}:|z|<l\right\} 
		\nonumber
	\end{equation} 
	and has only a finite number of simple poles $p_m$ in $B(l)$. 
	
 Consider the following Gaussian quadrature of $g(x)$ on interval $[-1,1]\subset(-l,l)$:
\begin{equation}\label{respole}
	\int_{-1}^{1}g(x)dx=\sum_{j=1}^{J}\omega_jg(\xi_j)-\sum_mY_J(p_m)Res(g)_{p_m}+R_J(g).
\end{equation}
Here $\omega_j$ and $\xi_j$ denote respectively the Gaussian quadrature weights and nodes for $j=1,2,...,J$, $Res(g)_{p_m}$ is the residue of $g$ at the pole $p_m$, and from \cite{gautschi1981survey} we have 
\begin{equation}
	R_J(g)=\frac{1}{2\pi i}\int_{|z|=l}Y_J(z)g(z)dz
\end{equation}
with 
\begin{equation}
	Y_J(z):=\frac{1}{P_J}\int_{-1}^{1}\frac{P_J(x)}{z-x}dx,\qquad z\in\mathbb{C}\backslash[-1,1]
\end{equation}
and $P_J$ being the Legendre orthogonal polynomial of degree $J$.

\begin{remark} If $g(z)$ is analytic in $\overline{B(l)}$, the Gaussian quadrature \cref{respole} rewritten as follows:
\begin{equation}\label{Gaussquadrature}
	\int_{-1}^{1}g(x)dx=\sum_{j=1}^{J}\omega_jg(\xi_j)+R_J(g).
\end{equation}
\end{remark}

The following estimate of $R_J(g)$ is from \cite{baffet2019gauss}.
\begin{lemma}\label{Theorem3.1}
	There exists a positive integer $J_*$ and a positive constant $C$, independent of 
	$l$, such that
	\begin{equation}\label{Gaussestimation}
		|R_J(g)|\leq C(l+\sqrt{l^2-1})^{-2J}\max\limits_{|z|=l}|g(z)|, \qquad \forall J>J_* .
	\end{equation}
\end{lemma}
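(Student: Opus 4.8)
The plan is to bound the contour integral defining $R_J(g)$ directly, thereby reducing the whole statement to a decay estimate for the remainder kernel $Y_J$ on the circle $|z|=l$. Since all poles $p_m$ lie in the open disc $B(l)$, the integrand $g$ is analytic on $|z|=l$, so $\max_{|z|=l}|g(z)|$ is finite, and from $R_J(g)=\frac{1}{2\pi i}\int_{|z|=l}Y_J(z)g(z)\,dz$ the standard length estimate (the circle has length $2\pi l$) gives
\[
	|R_J(g)|\le l\,\max_{|z|=l}|Y_J(z)|\cdot\max_{|z|=l}|g(z)|.
\]
It therefore suffices to show $l\,\max_{|z|=l}|Y_J(z)|\le C(l+\sqrt{l^2-1})^{-2J}$ for all $J>J_*$, with $C$ carrying no exponential dependence on $l$.

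Next I would identify the kernel explicitly. By the Neumann integral representation of the Legendre function of the second kind, $\int_{-1}^{1}\frac{P_J(x)}{z-x}\,dx=2Q_J(z)$, so that $Y_J(z)=2Q_J(z)/P_J(z)$. Writing $\phi(z)=z+\sqrt{z^2-1}$ for the conformal map of $\mathbb{C}\setminus[-1,1]$ onto the exterior of the unit disc, the classical asymptotics of $P_J$ and $Q_J$ (whose $(z^2-1)^{-1/4}$ prefactors cancel in the quotient) yield
\[
	\frac{Q_J(z)}{P_J(z)}=\pi\,\phi(z)^{-(2J+1)}\bigl(1+o(1)\bigr)\qquad(J\to\infty),
\]
uniformly for $z$ on the circle, whence $\max_{|z|=l}|Y_J(z)|\le C'\bigl(\min_{|z|=l}|\phi(z)|\bigr)^{-(2J+1)}$ for $J>J_*$.

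It then remains to minimise $|\phi(z)|$ over $|z|=l$. Setting $z=\tfrac12(u+u^{-1})$ with $u=re^{i\psi}$ and $r=|\phi(z)|>1$, expanding $|z|^2=l^2$ gives the elementary relation $r-r^{-1}=2\sqrt{l^2-\cos^2\psi}$, which is smallest at $\psi\in\{0,\pi\}$, i.e.\ at the real points $z=\pm l$, where $r=l+\sqrt{l^2-1}$. Hence $\min_{|z|=l}|\phi(z)|=l+\sqrt{l^2-1}$, and since $l/(l+\sqrt{l^2-1})\le1$,
\[
	l\,\max_{|z|=l}|Y_J(z)|\le C'\,\frac{l}{l+\sqrt{l^2-1}}\,(l+\sqrt{l^2-1})^{-2J}\le C'\,(l+\sqrt{l^2-1})^{-2J},
\]
which is exactly \cref{Gaussestimation} with $C=C'$. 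Note that it is precisely the cancellation $l/(l+\sqrt{l^2-1})\le1$ between the contour length and one factor of $\phi^{-1}$ that removes the polynomial $l$-dependence.

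The hard part is the uniformity of the kernel estimate near the branch points $z=\pm1$. The asymptotic for $Q_J/P_J$ is uniform only for $z$ bounded away from $[-1,1]$, whereas the extremal points $z=\pm l$ approach $\pm1$ as $l\to1^+$, where $Q_J$ has logarithmic singularities and $\max_{|z|=l}|Y_J|$ in fact grows; controlling this is what fixes the threshold $J_*$ and what makes the constant $C$ independent of $l$ as long as $l$ stays in the range of interest (bounded away from $1$). This uniform control over the Legendre functions of the second kind is exactly the content supplied by the sharp bounds underlying \cref{Theorem3.1} (cf.\ \cite{baffet2019gauss,gautschi1981survey}); by contrast, the length estimate and the minimisation of $|\phi|$ are routine.
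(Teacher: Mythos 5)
The paper does not actually prove this lemma: it is imported verbatim from \cite{baffet2019gauss} (``The following estimate of $R_J(g)$ is from \cite{baffet2019gauss}''), so there is no in-paper argument to compare against. Your sketch reconstructs what is essentially the standard proof behind that citation: the ML estimate on the contour $|z|=l$, the identification $Y_J=2Q_J/P_J$ via Neumann's formula, the asymptotics $Q_J/P_J\sim\pi\phi^{-(2J+1)}$ with $\phi(z)=z+\sqrt{z^2-1}$, and the Bernstein-ellipse computation showing $\min_{|z|=l}|\phi(z)|=l+\sqrt{l^2-1}$ at $z=\pm l$. All of the steps you actually carry out are correct, including the observation that the factor $l$ from the contour length is absorbed by one power of $\phi^{-1}$.

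The one genuine gap is the point you yourself flag, and it is not a peripheral technicality but the entire content of the clause ``$C$ and $J_*$ independent of $l$.'' The asymptotic $Q_J(z)/P_J(z)=\pi\,\phi(z)^{-(2J+1)}(1+o(1))$ is uniform only on compact subsets of $\mathbb{C}\setminus[-1,1]$; the extremal points $z=\pm l$ approach the branch points as $l\to1^{+}$, where $Q_J$ has a logarithmic singularity while $(l+\sqrt{l^2-1})^{-2J}\to1$, so the crude bound $l\max_{|z|=l}|Y_J|$ does \emph{not} stay below a multiple of $(l+\sqrt{l^2-1})^{-2J}$ uniformly in $l$. Obtaining the lemma as stated therefore requires an $l$-uniform two-sided bound on $Y_J$ (or a sharper duality/Chebyshev-coefficient argument replacing the ML estimate), and deferring this to ``the sharp bounds underlying \cref{Theorem3.1}'' is circular as a proof of \cref{Theorem3.1}. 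In fairness, for the paper's application $l$ is taken from a fixed range such as $l\in[1.09,1.5]$, where your argument closes with $C,J_*$ depending only on that range; but as a proof of \cref{Gaussestimation} in the generality claimed, the crux is asserted rather than established.
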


\subsubsection{SOE approximation of   $E_{\alpha}(-t^{\alpha})$}

Applying the variable substitution $x=s^{-\alpha}$ in the integral \eqref{identityMLF},  we  get
\begin{equation}\label{identityMLF1}
		\begin{aligned}
			E_{\alpha}(-t^{\alpha})&= \int_{0}^{\infty}f(x,t,\alpha)dx
		\end{aligned}
	\end{equation}
with
\begin{equation}
	f(x,t,\alpha):=\frac{\sin(\alpha\pi)}{\alpha\pi} \frac{e^{-tx^{-\frac 1\alpha} }}{x^{2}+2x\cos\alpha\pi+1}.
	\nonumber
\end{equation}

In order to derive an efficient SOE approximation for $E_{\alpha}(-t^{\alpha})$, we truncate the integral \cref{identityMLF1} to a finite interval, then subdivide the finite interval into a set of non-overlapping intervals: 
\begin{equation}\label{non-overlapping-intervals}
	\big(c_k-r_k \,,\, c_k+r_k\big),\ k=0,1,\cdots,K.
\end{equation} 
Specifically, we decompose \cref{identityMLF1} into a summation form 
\begin{equation}\label{approxiamationMLF}
	E_{\alpha}(-t^{\alpha})=\sum_{k=0}^{K}\int_{c_k-r_k}^{c_k+r_k}f(x,t,\alpha)dx+\int_{c_k+r_k}^{\infty}f(x,t,\alpha)dx.
\end{equation}
and apply the Gauss quadrature discussed in \cref{Gausssec} for the integral over each subinterval $\big(c_k-r_k \,,\, c_k+r_k\big)$. Following a similar strategy as outlined in \cite{yan2017fast,baffet2019gauss}, we determine the centers $c_0,\, c_1,\cdots,c_K$ and radii $r_0,\, r_1,\cdots,r_K$ for the intervals defined in \cref{non-overlapping-intervals} as follows: let $q>1$ be a constant,
\begin{equation}\label{ckrk}
	c_0=r_0=\frac{1}{2}, \quad
	c_k 
	=\frac{(q+1)q^{k-1}}{2}, \quad r_k=\frac{(q-1)q^{k-1}}{2},   \qquad k=1,2,\cdots,K.
\end{equation}
\begin{remark}
	Note that  the choice of an exponential function $q^k$  in \cref{ckrk}   leads to an exponential increase in subinterval length. 	
  This  then allows, as  shown in   \cref{numK}, as few  subintervals $\big(c_k-r_k \,,\, c_k+r_k\big)$ in   \cref{approxiamationMLF} as possible  while keeping the required approximation accuracy of the finite sum. In addition,
		  when applying the Gauss quadrature  formula to each term $\int_{c_k-r_k}^{c_k+r_k}f(x,t,\alpha)dx$,  as shown in \cref{Rerror}, the remaining term can be controlled uniformly.
\end{remark}

 We now at a position to  compute  the integral  term 
  $$ \int_{c_k-r_k}^{c_k+r_k}f(x,t,\alpha)dx$$ 
  for each $k$ so as to get the desired SOE approximation. To this end, 
we apply the integration variable substitution $x=r_ky+c_k$ to obtain
\begin{equation}\label{2.19}
	\begin{aligned}
		\int_{c_k-r_k}^{c_k+r_k}f(x,t,\alpha)dx&
		=\int_{-1}^{1}g_k(y,t)dy,\\
	\end{aligned}
\end{equation}
where
\begin{equation}\label{gk}
		g_k(y,t):=\frac{\sin(\alpha\pi)}{\alpha\pi} \frac{r_ke^{-t(r_ky+c_k)^{-\frac 1\alpha}}}{  (r_ky+c_k)^{2 }+2(r_ky+c_k)\cos\alpha\pi +1}, \quad k=0,1,  \cdots, K.
\end{equation}
Notice that $0<\alpha<1$ and 
$$z^{2 }+2z\cos(\alpha\pi) +1=\left(z + \cos\alpha\pi+i \sin\alpha\pi\right)\left(z+ \cos\alpha\pi-i \sin\alpha\pi\right),$$
then we easily know that for any $k$, $g_k(y,t)$ has two simple poles:
\begin{align}\label{2poles}
	\left\{\begin{aligned} & \zeta_{k,1} 
	=-\frac{c_k}{r_k}+ \frac{1}{r_k}\big(\cos (1-\alpha)\pi+i\sin(1-\alpha)\pi\big),\\
	& \zeta_{k,2}=\bar{\zeta}_{k,1}=-\frac{c_k}{r_k}+ \frac{1}{r_k}\big(\cos (1-\alpha)\pi-i\sin(1-\alpha)\pi\big).
	\end{aligned}
	\right.
\end{align}
Denote 
\begin{equation}\label{q1q2def}
		q_1:=\sqrt{  5- 4\cos(1-\alpha)\pi },\quad
		q_2:=\frac1 {q-1} \sqrt{  {(q+1)^2}- 4(q+1)\cos(1-\alpha)\pi+  4 }.
	\end{equation}
	For $0<\alpha <1$ we easily have 
	$$q_1 > \sqrt{  5- 4 } =1$$ and $$   q_2> \frac1 {q-1} \sqrt{  {(q+1)^2}- 4(q+1) +  4 }=1. $$
	Thus,   it is reasonable to make the following assumption on $l$ and $q$: 
	\begin{equation}\label{constraint-lq}
		1<l<\min\{1+\frac{2}{q}, q_1,q_2\}.
	\end{equation}

 \begin{remark}
 From \eqref{ckrk} 
 we easily know that, for $k=0,1,  \cdots, K,$ 
\begin{align}\label{zeta-k1k2-norm}
|\zeta_{k,1}|=|\zeta_{k,2}|&=\sqrt{\left(-\frac{q+1}{q-1}+ \frac{1}{r_k}\cos(1-\alpha)\pi\right)^2+ \frac{1}{r_k^2}\sin^2(1-\alpha)\pi}\nonumber\\
&=\sqrt{ \frac{(q+1)^2}{(q-1)^2}- \frac{4(q+1)}{q^{k-1}(q-1)^2}\cos(1-\alpha)\pi+ \frac{4}{q^{2(k-1)}(q-1)^2} }.
\end{align}   
This relation, together with   $0<\alpha<1$, $q>1$ and the assumption \eqref{constraint-lq}, 
further  implies that
 \begin{equation}\label{zeta_k1k2}
 |\zeta_{k,1}|=|\zeta_{k,2}| \left\{
 \begin{aligned}
 = &\ \sqrt{5-4\cos(1-\alpha)\pi}=q_1  >l & \text{if } k=0,\\
 =&\  \frac1 {q-1} \sqrt{  {(q+1)^2}- 4(q+1)\cos(1-\alpha)\pi+  4 } =q_2 >l & \text{if } k=1,\\
 \geq &\ \left|\frac{q+1}{q-1}- \frac{2}{q^{k-1}(q-1)}\right| \geq \frac{q+1}{q-1}- \frac{2}{q(q-1)}=1+\frac{2}{q}>l& \text{if } k\geq 2.\\ 
\end{aligned}
 \right.
  \end{equation}

\end{remark}

By  
 \eqref{zeta_k1k2} 
it is easy to see that  $g_k(\cdot,t)$     has no poles in the disk $B(l)$  for any $k $.  Thus, from the Gaussian quadrature formula  \cref{Gaussquadrature} 
%
we have 
\begin{equation}\label{approxgkk}
	\int_{-1}^{1}g_k(x,t)dx=\sum_{j=1}^{J}b_{kj}e^{-ta_{kj}}+R_J(g_k), \quad k=0,1,\cdots,  K,
\end{equation}
where,  for $k=0,1,\cdots,  K$ and $ j=1,\cdots,J,$
\begin{equation}\label{a-kj}
\left\{\begin{aligned}	&a_{kj}=(r_k\xi_j+c_k)^{-\frac{1}{\alpha}} , \\
 &b_{kj}=\frac{\sin(\alpha\pi)}{\alpha\pi}\cdot \frac{\omega_jr_k}{  (r_k\xi_j+c_k)^{2 }+2(r_k\xi_j+c_k)\cos\alpha\pi +1}.
\end{aligned}
\right.	
\end{equation}
We recall that $\omega_j$ and $\xi_j$ denote the Gaussian quadrature weights and nodes, respectively. 

Substituting \eqref{approxgkk} and \eqref{2.19} into \eqref{approxiamationMLF},   we finally get the   
sum-of-exponentials approximation of the Mittag-Leffler function 
	\begin{align}\label{SOEapp}
		E_{\alpha}(-t^\alpha)&=\sum_{k=0}^{K}\left(\sum_{j=1}^{J}b_{kj}e^{-ta_{kj} }+R_J(g_k)\right)+\int_{q^K}^{\infty}f(x,t,\alpha)dx\nonumber \\
		&= \sum_{k=0}^{K}\sum_{j=1}^{J}b_{kj}e^{-ta_{kj} }+R_{soe}(t), 
	\end{align}
where 
	\begin{align}
		&R_{soe}(t):=\sum_{k=0}^{K}R_J(g_k)+\int_{q^K}^{\infty}f(x,t,\alpha)dx. \label{remain}
	\end{align}
In what follows we shall estimate the remaining term  $R_{soe}(t)$. For the truncation integral  term of \eqref{remain}, we easily obtain the following conclusion:
\begin{lemma}\label{numK}
	For  $0<\alpha<1, q>1$ and $t> 0$, there holds
	\begin{equation}
		\left|\int_{q^K}^{\infty}f(x,t,\alpha)dx\right|\leq\frac{1}{q^{  K}-1}.
	\end{equation}
\end{lemma}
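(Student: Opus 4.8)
The plan is to bound the integrand pointwise by an elementary function whose tail integral evaluates in closed form to exactly $\frac{1}{q^K-1}$. Since $q>1$ forces $q^K>1$, on the integration range $[q^K,\infty)$ every factor of $f(x,t,\alpha)$ is positive: the prefactor $\frac{\sin(\alpha\pi)}{\alpha\pi}$ is positive for $0<\alpha<1$, the exponential is positive, and the denominator is positive whenever $x>1$ (as the denominator estimate below will confirm). Hence the absolute value on the left-hand side equals the integral itself, and it suffices to estimate $\int_{q^K}^{\infty}f(x,t,\alpha)\,dx$ from above.

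Next I would estimate the three factors of $f$ separately. For the prefactor, the elementary inequality $\sin y\le y$ for $y>0$, applied with $y=\alpha\pi$, gives $\frac{\sin(\alpha\pi)}{\alpha\pi}\le 1$. For the exponential, since $t>0$ and $x^{-1/\alpha}>0$ on the integration range, we have $e^{-tx^{-1/\alpha}}\le 1$. For the denominator, using only $\cos(\alpha\pi)\ge -1$ together with $x>0$ yields
\begin{equation}
	x^{2}+2x\cos(\alpha\pi)+1\ge x^{2}-2x+1=(x-1)^{2},
	\nonumber
\end{equation}
which is strictly positive for $x\ge q^K>1$. Combining these three bounds gives $f(x,t,\alpha)\le (x-1)^{-2}$ throughout $[q^K,\infty)$.

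The proof then concludes with the elementary computation
\begin{equation}
	\int_{q^K}^{\infty}\frac{dx}{(x-1)^{2}}=\left[-\frac{1}{x-1}\right]_{q^K}^{\infty}=\frac{1}{q^K-1}.
	\nonumber
\end{equation}
There is no genuine obstacle here; the argument is a routine majorization. The only point worth noting is the choice of lower bound for the denominator: the crudest estimate $\cos(\alpha\pi)\ge -1$ produces exactly the quadratic $(x-1)^2$ whose reciprocal integrates to the stated constant, so no sharper, and more $\alpha$-dependent, estimate is needed. I would emphasize that this worst-case bound is genuinely uniform in $\alpha$, consistent with the fact that the constant $\frac{1}{q^K-1}$ in the statement is independent of $\alpha$ and $t$.
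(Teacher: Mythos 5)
Your proof is correct and follows essentially the same route as the paper's: bound the prefactor and the exponential by $1$, bound the denominator below by $x^2-2x+1=(x-1)^2$ via $\cos(\alpha\pi)\ge -1$, and evaluate the resulting tail integral in closed form. The only difference is cosmetic — you additionally record the positivity of the integrand and the elementary inequality $\sin y\le y$, which the paper states without justification.
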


\begin{proof}
	Notice that 
	\begin{equation}
		\begin{aligned}
			&\int_{q^K}^{\infty}f(x,t,\alpha)dx=\frac{\sin\alpha\pi}{\alpha\pi}\cdot\int_{q^K}^{\infty}\frac{e^{-tx^{-\frac{1}{\alpha}}}}{x^2+2x\cos \alpha\pi+1}dx.
		\end{aligned}
	\end{equation}
	For $0<\alpha<1$ and $t$, $x>0$, we have 
	\begin{equation}
		0<e^{-tx^{-\frac{1}{\alpha}}}\leq1, \quad 0<\frac{\sin\alpha\pi}{\alpha\pi}<1,
		\nonumber
	\end{equation}
	  and then
	\begin{equation}
		\begin{aligned}
			\left|\int_{q^K}^{\infty}f(x,t,\alpha)dx\right|
			&\leq \left|\int_{q^K}^{\infty}\frac{1}{x^2+2x\cos\alpha\pi+1}dx\right| \\
			&\leq \int_{q^K}^{\infty}\frac{1}{x^2-2x+1}dx=\frac{1}{q^{ K}-1}.
		\end{aligned}
		\nonumber
	\end{equation}
	This finishes the proof.
\end{proof}
%


For the term $R_{J}(g_k)$ in \eqref{remain}, we have the following result:

%

\begin{lemma}\label{Rerror}
	For $0<t\leq T$, $0<\alpha<1 $, $q>1$ and $l$ satisfying \eqref{constraint-lq}, there holds
	\begin{align}\label{paraC}
		|R_{J}(g_k)|\leq C_{\alpha,T,q}(l+\sqrt{l^2-1})^{-2J}, \quad k=0, 1, \cdots,K,
	\end{align}
	where
	\begin{equation*}
		\begin{aligned}
			C_{\alpha,T,q}=\left\{\begin{aligned}
				& \frac{2qe^{T}}{(q-1)(q_1-l)^{2}}  &\text{if } k=0,\\
				&\frac{2qe^{T}}{(q-1)(q_2-l)^{2}}  &\text{if } k=1,\\
				&\frac{2qe^{T}}{(q-1)(1+\frac{q}{2}-l)^{2}}  &\text{if } k\geq2.\\
			\end{aligned}\right.
		\end{aligned}
	\end{equation*}
\end{lemma}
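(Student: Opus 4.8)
The plan is to obtain the estimate directly from \cref{Theorem3.1}. By \eqref{zeta_k1k2} the two simple poles $\zeta_{k,1},\zeta_{k,2}$ of $g_k(\cdot,t)$ satisfy $|\zeta_{k,1}|=|\zeta_{k,2}|>l$, and for $k\ge1$ the branch point $z=-c_k/r_k$ of $(r_kz+c_k)^{-1/\alpha}$ has modulus $(q+1)/(q-1)>l$; hence $g_k(\cdot,t)$ is analytic in an open neighbourhood of $\overline{B(l)}$ and \cref{Theorem3.1} yields $|R_J(g_k)|\le C(l+\sqrt{l^2-1})^{-2J}\max_{|z|=l}|g_k(z,t)|$. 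Everything therefore reduces to bounding $\max_{|z|=l}|g_k(z,t)|$ and matching it with $C_{\alpha,T,q}$.

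For the rational part I would use the factorization from the text in the $z$-variable, $(r_kz+c_k)^2+2(r_kz+c_k)\cos\alpha\pi+1=r_k^2(z-\zeta_{k,1})(z-\zeta_{k,2})$. On $|z|=l$ the reverse triangle inequality together with $|\zeta_{k,1}|=|\zeta_{k,2}|$ gives the lower bound $r_k^2(|\zeta_{k,1}|-l)^2$; inserting the three values of $|\zeta_{k,1}|$ from \eqref{zeta_k1k2}, i.e.\ $q_1$, $q_2$ and $1+2/q$, produces exactly the denominators $(q_1-l)^2$, $(q_2-l)^2$, $(1+\tfrac2q-l)^2$ appearing in $C_{\alpha,T,q}$. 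The prefactor obeys $\sin(\alpha\pi)/(\alpha\pi)\le1$, and the single factor $r_k$ in the numerator of \eqref{gk} cancels one power of $r_k^2$, leaving $1/r_k=2/\bigl((q-1)q^{k-1}\bigr)\le 2q/(q-1)$ by \eqref{ckrk}, which accounts for the common factor $2q/(q-1)$.

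The main obstacle is the exponential factor, where I must establish $|e^{-t(r_kz+c_k)^{-1/\alpha}}|=e^{-t\,\mathrm{Re}\,(r_kz+c_k)^{-1/\alpha}}\le e^{T}$ on $|z|=l$ for $0<t\le T$, i.e.\ $\mathrm{Re}\,(r_kz+c_k)^{-1/\alpha}\ge -1$. Writing $w=r_kz+c_k$, this is immediate whenever $|w|\ge1$, since then $|w^{-1/\alpha}|\le1$ and $\mathrm{Re}\,w^{-1/\alpha}\ge-|w^{-1/\alpha}|\ge-1$. On $|z|=l$ one has $|w|\ge c_k-r_kl=\tfrac{q^{k-1}}2\bigl((q+1)-(q-1)l\bigr)$, and $l<1+\tfrac2q$ from \eqref{constraint-lq} gives $(q+1)-(q-1)l>2/q$, so $|w|\ge1$ and the bound is clean for every $k\ge2$. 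The genuinely delicate cases are $k=0,1$: there $|w|$ can drop below $1$ (indeed for $k=0$ the point $z=-1$, where $w=0$, lies inside $B(l)$), so the modulus bound no longer suffices and one must instead track the argument of $w$, confined to a sector governed by $r_kl/c_k$ together with $l<q_1,q_2$, to prevent the angle-expanding map $w\mapsto w^{-1/\alpha}$ (with $1/\alpha>1$) from sending small-modulus points deep into the left half-plane. Controlling $\mathrm{Re}\,w^{-1/\alpha}$ from below uniformly there, so as to retain the clean factor $e^{T}$, is the crux of the argument; once it is in place, multiplying the three bounds and absorbing the constant $C$ of \cref{Theorem3.1} completes the proof.
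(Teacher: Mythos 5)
Your overall route is the same as the paper's: apply \cref{Theorem3.1} and reduce everything to $\max_{|z|=l}|g_k(z,t)|$, splitting $g_k$ into the rational factor (bounded below on $|z|=l$ through the poles $\zeta_{k,1},\zeta_{k,2}$ and \eqref{zeta_k1k2}, which yields the three denominators) and the exponential factor. Your treatment of the rational part, of the prefactor $1/r_k\le 2q/(q-1)$, and of the exponential factor for $k\ge2$ (via $|r_kz+c_k|\ge1$) matches the paper and is sound.

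The gap is that you never establish $\max_{|z|=l}\bigl|e^{-t(r_kz+c_k)^{-1/\alpha}}\bigr|\le e^{T}$ for $k=0,1$; you explicitly call it ``the crux'' and defer it, so the proposal is not a complete proof. Moreover, this is not a detail that the sector argument you sketch can fill in. For $k=0$ one has $w=(z+1)/2$, so $w=0$ at $z=-1\in B(l)$, where $w^{-1/\alpha}$ has a branch point; hence $g_0(\cdot,t)$ is not even meromorphic on $\overline{B(l)}$ and the hypothesis of \cref{Theorem3.1} is not literally met. On the circle near $z=-l$ one gets $w=(1-l)/2<0$ and $\mathrm{Re}\,w^{-1/\alpha}=|w|^{-1/\alpha}\cos(\pi/\alpha)$, which for $\alpha\in(2/3,1)$ equals $-\bigl(\tfrac{l-1}{2}\bigr)^{-1/\alpha}\snm{\cos(\pi/\alpha)}$ and lies far below $-1$ when $l$ is close to $1$, so no uniform lower bound on $\mathrm{Re}\,w^{-1/\alpha}$ of the kind you need is available there. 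For comparison, the paper's own proof handles this step by writing $z=le^{i\theta}$, discarding the cosine via $-\cos(\tilde\theta/\alpha)\le1$, and bounding the resulting modulus $|z+c_k/r_k|^{-1/\alpha}$; but it silently replaces $c_0/r_0=1$ by $\tfrac{q+1}{q-1}$ and its final inequality invokes $(l+\tfrac{q+1}{q-1})^{-1/\alpha}$ where the correct worst case on the circle is $(\tfrac{q+1}{q-1}-l)^{-1/\alpha}$. So the difficulty you flagged at $k=0,1$ is genuine and is exactly where the argument is weakest; completing a proof along these lines requires either deforming or shrinking the contour for the small-$k$ terms away from the branch point, or accepting a $k$-dependent constant larger than $e^{T}$ there.
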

\begin{proof}
	In light of  \cref{Gaussestimation}, 
	for each $R_{J}(g_k)$ we only need to estimate the term $\max\limits_{|z|=l}|g_k(z,t)|$. By  
 \eqref{gk} and \eqref{2poles} we have 
	\begin{equation}\label{max}
	\begin{aligned}
		\max 
		\limits_{|z|=l}|g_k(z,t)|&< 
		\frac{\max\limits_{|z|=l}|e^{-t(r_kz+c_k)^{-\frac1\alpha}}|}{r_k \min\limits_{|z|=l}|z-\zeta_{k,1}|\ |z-\zeta_{k,2}|}\ <	\frac{\max\limits_{|z|=l}|e^{-tr_k^{-\frac1\alpha}(z+\frac{q+1}{q-1})^{-\frac1\alpha}}|}{r_k\min\limits_{|z|=l}|z-\zeta_{k,1}|\ |z-\zeta_{k,2}|}.
	\end{aligned}
\end{equation}
From \eqref{ckrk} and \cref{zeta_k1k2} we easily know that
\begin{align}
	 \frac 1 r_k 
	 \leq \max\{2, \frac{2}{q-1} \}< 2+ \frac{2}{q-1}=\frac{2q}{q-1},\quad  k=0,1, 2, \cdots
\end{align} 
and 
\begin{equation}
	\begin{aligned}
		&\min\limits_{|z|=l}|z-\zeta_{k,1}|\ |z-\zeta_{k,2}|\geq \left\{\begin{aligned}
			&(q_1-l)^{2},\quad &\text{if }k=0,\\
			&(q_2-l)^{2},\quad &\text{if }k=1,\\
			&(1+\frac{q}{2} -l)^{2},\quad &\text{if }k\geq2.\\
		\end{aligned}\right.
	\end{aligned}
\end{equation}


To estimate the term $\max\limits_{|z|=l}|e^{-tr_k^{-\frac1\alpha}(z+\frac{q+1}{q-1})^{-\frac1\alpha}}|$, 
 we assume   $z=l(\cos\theta+	i\sin\theta)$ with $-\pi<\theta\leq\pi$ and obtain 
$$ z+\frac{q+1}{q-1}  =l\cos\theta+\frac{q+1}{q-1}+i l\sin\theta=\left[\left(l\cos\theta+\frac{q+1}{q-1}\right)^2+l^2\sin^2\theta\right]^{\frac12}\ (
\cos\tilde\theta+	i\sin\tilde\theta)$$
with
$\tilde{\theta}=\arctan 
\frac{l\sin\theta}{l\cos\theta+\frac{q+1}{q-1}}.
$ This means 
$$ \left(z+\frac{q+1}{q-1}\right)^{-\frac1\alpha} =\left[\left(l\cos\theta+\frac{q+1}{q-1}\right)^2+l^2\sin^2\theta\right]^{-\frac{1}{2\alpha}}\left(\cos(-\frac{\tilde\theta}{\alpha})+	i\sin(-\frac{\tilde\theta}{\alpha})\right) $$
and 
\begin{align*}
-Re \left(z+\frac{q+1}{q-1}\right)^{-\frac1\alpha} & =-\left[\left(l\cos\theta+\frac{q+1}{q-1}\right)^2+l^2\sin^2\theta\right]^{-\frac{1}{2\alpha}}\cos(-\frac{\tilde\theta}{\alpha})\\
 &\leq \left[\left(l\cos\theta+\frac{q+1}{q-1}\right)^2+l^2\sin^2\theta\right]^{-\frac{1}{2\alpha}}\leq \left(l+\frac{q+1}{q-1}\right)^{-\frac{1}{\alpha}}.
 \end{align*}
Thus,   we have
	\begin{align}\label{inequality2}
			\max\limits_{|z|=l}|e^{-tr_k^{-\frac1\alpha}(z+\frac{q+1}{q-1})^{-\frac1\alpha}}|
			&=\max\limits_{|z|=l}\left|e^{-tr_k^{-\frac{1}{\alpha}}Re(z+\frac{c_k}{r_k})^{-\frac1\alpha}}\right|\nonumber\\
			&\leq e^{T (\frac{2q}{q-1})^{\frac{1}{\alpha}}
			\left(\frac{q+1}{q-1}+l\right)^{-\frac{1}{\alpha}}}\nonumber\\	
			&\leq e^{T (\frac{2q}{ q+1+l(q-1)})^{\frac{1}{\alpha}}}\leq e^T.
		\end{align}
Finally, combining   \cref{Gaussestimation} and  the inequalities \cref{max}-\cref{inequality2} gives the desired estimate \cref{paraC}.
%
%
%
\end{proof}

 Recalling \eqref{a-kj},   we give a compact form of the SOE approximation \cref{SOEapp} as follows:
  \begin{align}\label{SOEappnew}
		E_{\alpha}(-t^\alpha)
		=\sum_{j=1}^{N_{exp}}b_je^{-a_jt}+R_{soe}(t),
	\end{align}
where
    $N_{exp}=(K+1)J$,  and   $a_j$ and $b_j$ are the j-th elements of 
\begin{align*}
 &[a_{01},a_{02},...,a_{0J},a_{11},a_{12},...,a_{1J},...,a_{(K+1)1},...,a_{(K+1)(J-1)},a_{(K+1)J}]
\end{align*}
and 
\begin{align*}
 &[b_{01},b_{02},...,b_{0J},b_{11},b_{12},...,b_{1J},...,b_{(K+1)1},...,b_{(K+1)(J-1)},b_{(K+1)J}],
\end{align*}
 respectively.   
 
 We are now at a position to estimate the  the SOE approximation error
\begin{equation}\label{approxerrorE}
R_{soe}(t) = E_{\alpha}(-t^\alpha)-\sum_{j=1}^{N_{exp}}b_je^{-a_jt}, \quad 0<t\leq T.
\end{equation}
 In light of Lemmas \ref{numK} and \ref{Rerror} and the relation \eqref{remain},   we immediately get the following  main conclusion: 
\begin{theorem}\label{approxerror}
	For   $0<\alpha<1$, $q>1$ and  $1<l<\min\{1+\frac{2}{q}, q_1,q_2\}$, 
	there holds 
	\begin{equation}\label{approxerror11}
	\left|R_{soe}(t)\right|	 \leq C_{\alpha,T,q}(K+1)(l+\sqrt{l^2-1})^{-2J}+\frac{1}{q^{  K}-1},\quad 0<t\leq T.
	\end{equation}
Moreover, for any $0<\epsilon<1$ there holds 
\begin{align}\label{SOEapp2}
		\left|R_{soe}(t)\right|=\mathcal{O}(\epsilon),\quad 0<t\leq T,
	\end{align}
  provided that
\begin{equation}\label{KJ-order}
	K=\mathcal{O}(|\log\epsilon|),\qquad J=\mathcal{O}(|\log(\epsilon^{-1}|\log\epsilon|)|).
\end{equation}

\end{theorem}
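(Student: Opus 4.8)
The plan is to obtain the explicit bound \eqref{approxerror11} directly from the decomposition \eqref{remain} together with the two preceding lemmas, and then to read off the asymptotic choice \eqref{KJ-order} by balancing the two error contributions against the target $\epsilon$.

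First I would apply the triangle inequality to \eqref{remain}, writing
$$|R_{soe}(t)| \le \sum_{k=0}^{K} |R_J(g_k)| + \left|\int_{q^K}^{\infty} f(x,t,\alpha)\,dx\right|.$$
For the first sum I would invoke \cref{Rerror} term by term. Since the piecewise constant there depends on $k$ only through the three cases $k=0$, $k=1$, $k\ge 2$, I would (re)define $C_{\alpha,T,q}$ to be the maximum of the three listed expressions, so that $|R_J(g_k)| \le C_{\alpha,T,q}(l+\sqrt{l^2-1})^{-2J}$ holds uniformly in $k$; summing over the $K+1$ indices then produces the prefactor $(K+1)$. For the tail integral I would use \cref{numK} to bound it by $1/(q^K-1)$. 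Both estimates are uniform in $t\in(0,T]$, so adding them yields \eqref{approxerror11} at once.

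For the asymptotic statement I would tune $K$ and $J$ separately. The tail contribution $1/(q^K-1)$ is $\mathcal{O}(\epsilon)$ as soon as $q^K \gtrsim \epsilon^{-1}$, i.e. $K\log q \gtrsim |\log\epsilon|$, which is exactly $K=\mathcal{O}(|\log\epsilon|)$. Setting $\gamma := l+\sqrt{l^2-1}>1$ (recall $l>1$), the Gaussian remainder contribution equals $C_{\alpha,T,q}(K+1)\gamma^{-2J}$; since $C_{\alpha,T,q}$ is a fixed constant depending only on $\alpha,T,q,l$ and $(K+1)=\mathcal{O}(|\log\epsilon|)$ under the above choice of $K$, this term is $\mathcal{O}(\epsilon)$ provided $\gamma^{2J}\gtrsim \epsilon^{-1}|\log\epsilon|$. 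Taking logarithms gives $2J\log\gamma \gtrsim \log(\epsilon^{-1}|\log\epsilon|)$, that is $J=\mathcal{O}(|\log(\epsilon^{-1}|\log\epsilon|)|)$, which is precisely \eqref{KJ-order}.

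The subtle point — and the one I would be most careful about — is the coupling between the two terms: because the number of subintervals $K+1$ enters as a prefactor of the Gaussian part and itself grows logarithmically once $K$ is tuned to kill the tail, one cannot simply take $J=\mathcal{O}(|\log\epsilon|)$; the extra $|\log\epsilon|$ factor must be absorbed, and this is exactly what inflates the bound to $J=\mathcal{O}(|\log(\epsilon^{-1}|\log\epsilon|)|)$. I would also confirm that $C_{\alpha,T,q}$ is independent of both $J$ and $K$ so that it contributes nothing to the asymptotics, and that $\epsilon^{-1}|\log\epsilon|>1$ for small $\epsilon$, so the logarithm is positive and the absolute value in \eqref{KJ-order} is harmless.
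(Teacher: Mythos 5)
Your proposal is correct and follows exactly the route the paper intends: the paper itself gives no written proof, stating only that the bound follows ``immediately'' from \cref{numK}, \cref{Rerror} and the decomposition \eqref{remain}, and your triangle-inequality argument with the uniform constant $C_{\alpha,T,q}$ and the subsequent balancing of $K$ and $J$ against $\epsilon$ is precisely that omitted argument. Your observation about the $(K+1)$ prefactor forcing the extra $|\log\epsilon|$ inside the logarithm for $J$ correctly explains the form of \eqref{KJ-order}.
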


\begin{remark}\label{rmk2.4} \cref{approxerror} means that for a given tolerance error $\epsilon$, the computation  complexity of the SOE approximation \cref{SOEappnew} is
$$ N_{exp}=(K+1)J= \mathcal{O}(|\log\epsilon|^2).$$
Furthermore, 	denote $N :=\frac{T}{\triangle t}$ with $\triangle t<1$ being the temporal step size, then for $\epsilon=\triangle t$  we have
	\begin{equation}
		N_{exp}  =\mathcal{O}(\log^2N).
	\end{equation}
\end{remark}
 
\begin{remark}\label{selectJK}
In view of  \eqref{approxerror11} and \eqref{KJ-order}, we shall select 
	\begin{align}\label{KJ-choice}
		K=\left\lceil\frac{|\log\varepsilon|}{\log q}\right\rceil,\qquad J=\left\lceil\frac{\log\left(\varepsilon^{-1}|\log\varepsilon|\right)}{2\log q\log l}\right\rceil
	\end{align}
 in   the numerical implementation  (cf.  Section 4),	where \( \lceil \cdot \rceil \) denotes the ceiling function, which rounds up to the nearest integer.
\end{remark}

\section{Numerical schemes for the fractional viscoelastic model}

In this section, we present two fully discrete mixed finite element schemes for the fractional viscoelastic model \cref{model}. One is based on the weak form \cref{weakformoftraditon} and applies the traditional L1 scheme and the Newmark scheme to discretize the  time-fractional derivative and the  second  time  derivative, respectively. The other one is based on the weak form \cref{weak-form} and adopts the SOE approximation for the Mittag-Leffler function. 

Let  $\uuline{\mathrm{H}_h}\subset\uuline{\mathrm{H}}(\mathrm{\textbf{div}},\Omega,S)$ and $\underline{\mathrm{V}_h}\subset\uline{L}^2(\Omega)$ be two finite-dimensional spaces for stress and displacement approximations, respectively.

For any positive integer $N$, let
$$
\{t_{n}:\ t_{n} = n\triangle t,\ 0\leq n\leq N \}
$$
be a uniform partition of the time interval $(0,T]$ with the time step size $\triangle t = T/N$.

\subsection{L1-Newmark mixed finite element scheme}

In view  of the weak form  \cref{weakformoftraditon}, 
the  generic semi-discrete mixed conforming finite element scheme  for the fractional viscoelastic model \cref{model} reads: 

Find $ \sigma_h(t)\in \uuline{\mathrm{H}_h}$ and $\textbf{u}_h(t)\in \underline{\mathrm{V}_h}$ such that 
\begin{equation}\label{semimodel}
	\left\{
	\begin{array}{ll}
		\langle\rho \textbf{u}_{h,tt} \,,\, \textbf{v}_h\rangle=\langle\mathrm{\textbf{div}}\sigma_h \,,\, \textbf{v}_h\rangle+\langle\textbf{f} \,,\, \textbf{v}_h\rangle,& \forall \, \textbf{v}_h\in \underline{\mathrm{V}_h},\\
		\langle\mathbb{D}^{-1}\sigma_h \,,\, \chi_h\rangle+\tau_{\sigma}^{\alpha}\langle\frac{\partial^{\alpha}\mathbb{D}^{-1}\sigma_h}{\partial t^{\alpha}} \,,\, \chi_h\rangle+\langle\mathrm{\textbf{div}}\chi_h \,,\, \textbf{u}_h\rangle+\tau_{\varepsilon}^{\alpha}\langle\mathrm{\textbf{div}}\chi_h \,,\, \frac{\partial^{\alpha}\textbf{u}_h}{\partial t^{\alpha}}\rangle=0, & \forall \, \chi_h \in \uuline{\mathrm{H}_h},\\
		\textbf{u}_h(0)=I_{\mathbb{V}_h}\textbf{u}_0,\ \textbf{u}_{h,t}(0)=I_{\mathbb{V}_h}\textbf{v}_0,\  \sigma_h(0)=I_{\mathbb{H}_h}\sigma_0,
		
	\end{array}
	\right.
\end{equation}
where $I_{\mathbb{V}_h}$ and $I_{\mathbb{H}_h}$ denote the projection operators onto $\underline{\mathrm{V}_h}$ and $\uuline{\mathrm{H}_h}$, respectively.

Let $\{\varphi_i\}_{i=1}^{r}$ and $\{\kappa_i\}_{i=1}^{s}$ be bases of $\uuline{\mathrm{H}_h}$ and $\uline{\mathrm{V}_h}$, respectively, and introduce matrices $\textbf{A}=(\textbf{A}_{ij})_{r\times r}$, $\textbf{B}=(\textbf{B}_{ij})_{r\times s}$, $\textbf{C}=(\textbf{C}_{ij})_{s\times s}$ with
\begin{align*}
	\textbf{A}_{ij}=\langle\mathbb{D}^{-1}\varphi_i \,,\, \varphi_j\rangle,\quad \textbf{B}_{ij}=\langle\mathrm{\textbf{div}}\varphi_i \,,\, \kappa_j\rangle,\quad \textbf{C}_{ij}=\langle\rho\kappa_i \,,\, \kappa_j\rangle.
\end{align*}
We write $\sigma_h=\sum_{i=1}^{r}\beta_i(t)\varphi_i,\ \textbf{u}_h=\sum_{j=1}^{s} U_j(t)\kappa_j,\ \eta_j=\langle\textbf{f}(t) \,,\, \kappa_j\rangle$, and denote
\begin{align*}
	\beta(t):=(\beta_1,\beta_2,\cdots,\beta_r)^{\mathrm{T}},\ U(t):=(U_1,U_2,\cdots,U_s)^{\mathrm{T}},\
	\eta(t):=(\eta_1,\eta_2,\cdots,\eta_s)^{\mathrm{T}}.
\end{align*}
Then we can rewrite \cref{semimodel} as the following matrix form:
\begin{equation}\label{semimatrix}
	\left\{
	\begin{array}{ll}
		\textbf{C}U_{tt}-\textbf{B}^{\mathrm{T}}\beta=\eta,\\
		\textbf{A}\beta+\tau_{\sigma}^{\alpha}\textbf{A}\frac{\partial^\alpha\beta}{\partial t^\alpha}+\textbf{B}U+\tau_{\varepsilon}^{\alpha}\textbf{B}\frac{\partial^\alpha U}{\partial t^\alpha}=0,\\
	\end{array}
	\right.
\end{equation}
with the initial data $U(0)=I_{\mathbb{V}_h}\textbf{u}_0,\ U_t(0)=I_{\mathbb{V}_h}\textbf{v}_0,\ \beta(0)=I_{\mathbb{H}_h}\sigma_0$.

To discretize the term $U_{tt}$ in \eqref{semimatrix}, we choose the Newmark scheme \cite{newmark1959method} as follows: 
\begin{equation}\label{Newmark}
\left\{
\begin{aligned}
&U_{tt}(t_n)=\frac{1}{{\triangle t}^2\theta_2}\left(U(t_n)-U(t_{n-1})-\triangle tU_t(t_{n-1})-\frac{{\triangle t}^2}{2}(1-2\theta_2)U_{tt}(t_{n-1})\right),\\
&U_t(t_{n})=U_t(t_{n-1})+\triangle t\left[(1-\theta_1)U_{tt}(t_{n-1})+\theta_1U_{tt}(t_{n})\right],
		\end{aligned}
		\right.
	\end{equation}
where the choice of parameters $(\theta_1,\theta_2)$ depends on the requirement of accuracy and stability for the scheme (cf. Remark \ref{rmk3.1}). In  our numerical experiments in next section we choose  $\theta_1=\frac{1}{2}$ and $\theta_2=\frac{1}{4}$.
\begin{remark}\label{rmk3.1}
	We list four well-known members of the Newmark method \cite{1983Computational,newmark1959method}: 
	\begin{table}[H]
		\setlength{\tabcolsep}{3mm}
		\renewcommand{\arraystretch}{1.3}
		\begin{tabular}{|c|cc|c|}  
			\hline
		Four   methods	  &$\theta_1$&$\theta_2$&Accuracy\\ \hline
			Newmark explicit method&$\frac{1}{2}$&0&second order\\ \hline
			Fox-Goodwin method&$\frac{1}{2}$&$\frac{1}{12}$&third order\\ \hline
			Linear average acceleration method&$\frac{1}{2}$&$\frac{1}{6}$&second order\\ \hline
			Constant average acceleration method&$\frac{1}{2}$&$\frac{1}{4}$&second order\\ \hline
		\end{tabular}
	\end{table} 
	\noindent  We note that the constant average acceleration Newmark method $(\theta_1=\frac{1}{2}$, $\theta_2=\frac{1}{4})$  is second-order accurate and unconditionally stable.
\end{remark}
For the discretization of Caputo fractional derivative $\frac{\partial^\alpha U}{\partial t^\alpha}$, the following L1 scheme  
is commonly used:
\begin{equation}\label{L1}
	\frac{\partial^\alpha U}{\partial t^\alpha}(t_n)=\frac{{\triangle t}^{-\alpha}}{\Gamma(2-\alpha)}\left[a_0^{\alpha}U(t_n)-\sum_{k=1}^{n-1}\left(a_{n-k-1}^{\alpha}-a_{n-k}^{\alpha}\right)U(t_k)-a_{n-1}^{\alpha}U(0)\right]
\end{equation}
where $a_k^{\alpha}=(k+1)^{1-\alpha}-k^{1-\alpha}$ for $k=0,1, \cdots, n-1$.

Substituting the L1 scheme \eqref{L1} and the Newmark scheme \eqref{Newmark} into \eqref{semimatrix} leads to the following fully discrete linear system: for $n=1,\ 2,\cdots,N$
	\begin{align}\label{fully1}
		&\left(\frac{\textbf{C}}{{\triangle t}^2\theta_2}+\frac{1+L_{\varepsilon}}{1+L_\sigma}\textbf{B}^{\mathrm{T}}\textbf{A}^{-1}\textbf{B}\right)U(t_n) \nonumber\\
		=\ &\eta(t_n)+\frac{L_\sigma}{1+L_\sigma}\textbf{B}^{\mathrm{T}}K_{\sigma,n-1}+\frac{L_\varepsilon}{1+L_\sigma}\textbf{B}^{\mathrm{T}}\textbf{A}^{-1}\textbf{B}K_{u,n-1}\nonumber\\
		&\quad +\frac{\textbf{C}}{{\triangle t}^2\theta_2}\left(U(t_{n-1})+\triangle tU_t(t_{n-1})+\frac{{\triangle t}^2}{2}(1-2\theta_2)U_{tt}(t_{n-1})\right) , 
	\end{align}
where 
\begin{equation*}
	\begin{aligned}
		&L_\sigma:=\frac{\tau_{\sigma}^\alpha}{\triangle t^\alpha\Gamma(2-\alpha)}, \quad &K_{\sigma,n-1}:=\sum_{k=1}^{n-1}\left(a_{n-k-1}^{\alpha}-a_{n-k}^{\alpha}\right)\beta(t_k)+a_{n-1}^{\alpha}\beta(0), \\
		&L_\varepsilon:=\frac{\tau_{\varepsilon}^\alpha}{\triangle t^\alpha\Gamma(2-\alpha)}, \quad &K_{u,n-1}:=\sum_{k=1}^{n-1}\left(a_{n-k-1}^{\alpha}-a_{n-k}^{\alpha}\right)U(t_k)+a_{n-1}^{\alpha}U(0).
	\end{aligned}
\end{equation*}
Define
\begin{align*}
	H_{n-1}:=\frac{L_\varepsilon}{1+L_\sigma}\textbf{B}^{\mathrm{T}}\textbf{A}^{-1}\textbf{B}K_{u,n-1}+\frac{L_\sigma}{1+L_\sigma}\textbf{B}^{\mathrm{T}}K_{\sigma,n-1},
\end{align*}
we easily have the following recurrence relation:
\begin{equation}\label{His}
	\begin{aligned}
		H_{n-1}=&\frac{L_\varepsilon}{1+L_\sigma}\textbf{B}^{\mathrm{T}}\textbf{A}^{-1}\textbf{B}K_{u,n-1}+\frac{L_\sigma}{1+L_\sigma}\textbf{B}^{\mathrm{T}}\left[\sum_{k=1}^{n-1}\left(a_{n-k-1}^{\alpha}-a_{n-k}^{\alpha}\right)\beta(t_k)+a_{n-1}^{\alpha}\beta(0)\right] \\
		=&\frac{L_\varepsilon}{1+L_\sigma}\textbf{B}^{\mathrm{T}}\textbf{A}^{-1}\textbf{B}K_{u,n-1}+\frac{L_\sigma}{1+L_\sigma}\sum_{k=1}^{n-1}\left(a_{n-k-1}^{\alpha}-a_{n-k}^{\alpha}\right)H_{k-1} \\
		&-\frac{L_\sigma(1+L_{\varepsilon})}{(1+L_\sigma)^2}\textbf{B}^{\mathrm{T}}\textbf{A}^{-1}\textbf{B}\left[\sum_{k=1}^{n-1}\left(a_{n-k-1}^{\alpha}-a_{n-k}^{\alpha}\right)U(t_k)+a_{n-1}^{\alpha}U(0)\right] \\
		=&\frac{L_\varepsilon-L_\sigma}{(1+L_\sigma)^2}\textbf{B}^{\mathrm{T}}\textbf{A}^{-1}\textbf{B}K_{u,n-1}+\frac{L_\sigma}{1+L_\sigma}\sum_{k=1}^{n-1}\left(a_{n-k-1}^{\alpha}-a_{n-k}^{\alpha}\right)H_{k-1}. \\
	\end{aligned}
\end{equation}
In conclusion, we have the following L1-Newmark mixed finite element
algorithm: 
\begin{algorithm}[H]
	\caption{L1-Newmark MFE scheme }\label{algo1}
	\begin{algorithmic}[1]
		\Require $U(0),\ U_t(0),\ \eta(0),\ H_0,\ U_{tt}(0)=\textbf{C}^{-1}\left(\eta(0)-\frac{1+L_{\varepsilon}}{1+L_\sigma}\textbf{B}^{\mathrm{T}}\textbf{A}^{-1}\textbf{B}U(0)\right)$.
		\Ensure $U(t_N)$
		\For{$n\gets 1, N$}
		\State Solve $U(t_n)$ with the   scheme 
		\begin{equation}\nonumber
			\begin{aligned}
				&\left(\frac{\textbf{C}}{{\triangle t}^2\theta_2}+\frac{1+L_{\varepsilon}}{1+L_\sigma}\textbf{B}^{\mathrm{T}}\textbf{A}^{-1}\textbf{B}\right)U(t_n)\\
				=&\eta(t_n)+H_{n-1}+ \frac{\textbf{C}}{{\triangle t}^2\theta_2}\left(U(t_{n-1})+\triangle tU_t(t_{n-1})+\frac{{\triangle t}^2}{2}(1-2\theta_2)U_{tt}(t_{n-1})\right).
			\end{aligned}
		\end{equation}
	    \State Calculate and store history variable $H_n$. 
	    \State Compute $U_t(t_n)$ and $U_{tt}(t_n)$ through \cref{Newmark}.
	    \EndFor 
	    \State Return $U(t_N)$.
	\end{algorithmic}
\end{algorithm}

Note that at each time step  we need to calculate and store the history variable $H_n$. This means that  \cref{algo1} requires $\mathcal O(N_sN)$ memory complexity and $O(N_sN^2)$ computation complexity. Here we simply denote by $\mathcal O(N_s)$  the complexities of memory and computation related to the spatial discretization.
As $N$ is large,  the complexities of memory and computation of  \cref{algo1} 
may   create obstacles for a long time simulation. Therefore, in the following subsection  we shall provide a fast numerical scheme  based on the weak form \cref{weak-form}. 

\subsection{Fast numerical scheme with SOE approximation}

In view  of the weak form  \cref{weak-form}, 
we have the following semi-discrete mixed conforming finite element scheme  for the fractional viscoelastic model \cref{model}: 

 Find $ \sigma_h(t)\in \uuline{\mathrm{H}_h}$ and $\textbf{u}_h(t)\in \underline{\mathrm{V}_h}$ such that 
  \begin{equation}\label{semimodelofnewconstitutive}
	\left\{
	\begin{array}{ll}
		\langle\rho \textbf{u}_{h,tt} \,,\, \textbf{v}_h\rangle=\langle\mathrm{\textbf{div}}\sigma_h \,,\, \textbf{v}_h\rangle+\langle\textbf{f} \,,\, \textbf{v}_h\rangle,\quad \forall \, \textbf{v}_h\in \underline{\mathrm{V}_h},\\
		\langle\mathbb{D}^{-1}\sigma_h \,,\, \chi_h\rangle+\big((\frac{\tau_{\varepsilon}}{\tau_{\sigma}})^\alpha-1\big)\int_{0}^{t}E_{\alpha}(-(\frac{t-\tau}{\tau_{\sigma}})^\alpha)\langle\mathrm{\textbf{div}}\chi_h \,,\, \textbf{u}_{h,t}(\tau)\rangle d\tau+\langle\mathrm{\textbf{div}}\chi_h \,,\, \textbf{u}_{h,t}\rangle \\
		\quad =E_{\alpha}(-(\frac{t}{\tau_{\sigma}})^\alpha)\langle\mathbb{D}^{-1}\sigma_0 \,,\, \chi_h\rangle+E_{\alpha}(-(\frac{t}{\tau_{\sigma}})^\alpha)\langle\mathrm{\textbf{div}}\chi_h \,,\, \textbf{u}_0\rangle, \quad \forall \, \chi_h \in \uuline{\mathrm{H}_h},\\
		\textbf{u}_h(0)=I_{\mathbb{V}_h}\textbf{u}_0,\ \textbf{u}_{h,t}(0)=I_{\mathbb{V}_h}\textbf{v}_0,
	\end{array}
	\right.
\end{equation}
Using the same notations as in Section 3.1, we rewrite this system  as the following matrix form:
\begin{equation}\label{fastsemimatrix}
	\left\{
	\begin{array}{ll}
		\textbf{C}U_{tt}-\textbf{B}^{\mathrm{T}}\beta=\eta,\\
		\textbf{A}\beta+\big((\frac{\tau_{\varepsilon}}{\tau_{\sigma}})^\alpha-1\big)\textbf{B}\int_{0}^{t}E_{\alpha}(-(\frac{t-\tau}{\tau_{\sigma}})^\alpha)U_t(\tau)d\tau+\textbf{B}U_t=\iota,
	\end{array}
	\right.
\end{equation}
where $\iota=(\textbf{A}\beta(0)+\textbf{B}U(0))E_{\alpha}(-(\frac{t}{\tau_{\sigma}})^\alpha)$, with the initial data $U(0)=I_{\mathbb{V}_h}\textbf{u}_0,\ U_t(0)=I_{\mathbb{V}_h}\textbf{v}_0,\ \beta(0)=I_{\mathbb{H}_h}\sigma_0$.

According to the SOE approximation \cref{SOEapp2} 
in \cref{approxerror}, we have
	\begin{align*}
		E_{\alpha}(-(\frac{t-\tau}{\tau_{\sigma}})^\alpha) 
		=\sum_{j=1}^{N_{exp}}b_je^{-a_j(\frac{t-\tau}{\tau_{\sigma}})}+\mathcal{O}(\epsilon),
	\end{align*}
 which, together with  integration by parts, gives
	\begin{align}\label{Soe33}
		\int_{0}^{t}E_{\alpha}(-(\frac{t-\tau}{\tau_{\sigma}})^\alpha)U_t(\tau)d\tau=&\sum_{j=1}^{N_{exp}}b_j\left(U(t)-U(0)e^{-a_j\frac{t}{\tau_{\sigma}}}-\frac{a_j}{\tau_{\sigma}}\int_{0}^{t}e^{-a_j(\frac{t-\tau}{\tau_{\sigma}})}U(\tau)d\tau\right)\nonumber\\
		&+\mathcal{O}(\epsilon).
	\end{align}
Introduce the history variable 
\begin{align*}
	G_j(t):=\int_{0}^{t}e^{-a_j(\frac{t-\tau}{\tau_{\sigma}})}U(\tau)d\tau,
\end{align*}
and we have the following simple recurrence relation at $t=t_n$:
\begin{align}\label{Gin}
	G_j(t_n)&=e^{-\frac{a_j}{\tau_{\sigma}}\triangle t}G_j(t_{n-1})+\int_{t_{n-1}}^{t_n}e^{-a_j(\frac{t_n-\tau}{\tau_{\sigma}})}U(\tau)d\tau \nonumber\\
	&\approx e^{-\frac{a_j}{\tau_{\sigma}}\triangle t}G_j(t_{n-1})+T_{1,j}U(t_{n-1})+T_{2,j}U_t(t_{n-1})+T_{3,j}U_{tt}(t_{n-1}),
\end{align}
where
\begin{equation*}
	\begin{aligned}
		&T_{1,j}=\frac{\tau_{\sigma}}{a_j}\left(1-e^{-\frac{a_j}{\tau_{\sigma}}\triangle t}\right), \quad T_{2,j}=\frac{\tau_{\sigma}}{a_j}\triangle te^{-\frac{a_j}{\tau_{\sigma}}\triangle t}+\left(\triangle t-\frac{\tau_{\sigma}}{a_j}\right)T_{1,j}, \\ 
		&T_{3,j}=\left(\frac{\triangle t^2}{2}-\triangle t\frac{\tau_{\sigma}}{a_j}+\frac{\tau_{\sigma}^2}{a_j^2}\right)T_{1,j}+\frac{\tau_{\sigma}}{a_j}\triangle te^{-\frac{a_j}{\tau_{\sigma}}\triangle t}\left(\frac{\triangle t}{2}-\frac{\tau_{\sigma}}{a_j}\right). \\
	\end{aligned}
\end{equation*}
Finally,  
we apply the Newmark scheme \eqref{Newmark} to the semi-discrete scheme \eqref{fastsemimatrix} and use  \eqref{Gin} and \eqref{Soe33}  to obtain the   linear system
	\begin{align}\label{fast}
		&\left[\frac{\textbf{C}+\theta_1\triangle t \textbf{B}^{\mathrm{T}}\textbf{A}^{-1}\textbf{B}}{\triangle t^2\theta_2}+((\frac{\tau_{\varepsilon}}{\tau_{\sigma}})^\alpha-1)\textbf{B}^{\mathrm{T}}\textbf{A}^{-1}\textbf{B}\right]U(t_n) \nonumber\\
		=&\eta(t_n)+E(t_n)+Q_1U(t_{n-1})+Q_2U_t(t_{n-1})+Q_3U_{tt}(t_{n-1})\nonumber\\
		&+\left((\frac{\tau_{\varepsilon}}{\tau_{\sigma}})^\alpha-1\right)\textbf{B}^{\mathrm{T}}\textbf{A}^{-1}\textbf{B}\sum_{j=1}^{N_{exp}}\frac{a_jb_j}{\tau_{\sigma}}G_j(t_{n}),
	\end{align}
where
\begin{equation*}\label{Wis}
	\begin{aligned}
		&E(t_n)=(\sum_{j=1}^{N_{exp}}b_je^{-a_j\frac{t_n}{\tau_{\sigma}}})\left(\textbf{B}^{\mathrm{T}}\beta(0)+(\frac{\tau_{\varepsilon}}{\tau_{\sigma}})^\alpha \textbf{B}^{\mathrm{T}}\textbf{A}^{-1}\textbf{B}U(0)\right),\\
		&Q_1=\frac{\textbf{C}+\theta_1\triangle t \textbf{B}^{\mathrm{T}}\textbf{A}^{-1}\textbf{B}}{\triangle t^2\theta_2},\quad Q_2=\frac{\textbf{C}+(\theta_1-\theta_2)\triangle t \textbf{B}^{\mathrm{T}}\textbf{A}^{-1}\textbf{B}}{\triangle t\theta_2},\\ &Q_3=\frac{(1-2\theta_2)\textbf{C}+\triangle t(\theta_1-2\theta_2)\textbf{B}^{\mathrm{T}}\textbf{A}^{-1}\textbf{B}}{2\theta_2},
	\end{aligned}
\end{equation*}
and the history variable $G_j(t_{n})$ is computed by using the approximation formula \eqref{Gin}, i.e.
\begin{align}\label{Ginnew}
	G_j(t_n)= e^{-\frac{a_j}{\tau_{\sigma}}\triangle t}G_j(t_{n-1})+T_{1,j}U(t_{n-1})+T_{2,j}U_t(t_{n-1})+T_{3,j}U_{tt}(t_{n-1}).
\end{align}
In particular, $G_j(0)=0.$ The resulting fast algorithm, i.e.  \cref{algo2}, is given as follows:

\begin{algorithm}[H]
	\caption{Fast scheme}\label{algo2}
	\begin{algorithmic}[1]
		\Require $U(0),\ U_t(0),\ G_j(0)=0,\ U_{tt}(0)=\textbf{C}^{-1}\left(\eta(0)+\textbf{B}^{\mathrm{T}}\beta(0)\right) $ 
		
		\Ensure $U(t_N)$
		\State Calculate $N_{exp},\ a_j,\ b_j,\ T_{i,j},\ Q_i, \ j=1,...,N_{exp},\ i=1, 2, 3. $
		\For{$n\gets 1, N$}
			\State Calculate by \eqref{Ginnew} and store the history variable $G_j(t_n), \ j=1,...,N_{exp}$.

\State Solve $U(t_n)$ with  the scheme \eqref{fast}.
			\State Get  $U_{tt}(t_n)$ and $U_t(t_n)$ through \cref{Newmark}.
		\EndFor 
		\State Return $U(t_N)$.
	\end{algorithmic}
\end{algorithm}

Comparing with the   L1-Newmark algorithm (\cref{algo1}), we easily see that,  due to  $N>>N_{exp}$ (cf. \cref{rmk2.4}), \cref{algo2} reduces  the costs of memory and computation from $O(N_sN)$ and $O(N_sN^2)$ to $O(N_{exp}N_s)$ and to $O(N_sN_{exp}N)$, respectively.

\section{Numerical results}
In this section, we provide some numerical results to verify the efficiency of both the SOE  approximation \eqref{SOEappnew} (or \eqref{SOEapp}) and the fast scheme (\cref{algo2}). All the algorithms are implemented by using MATLAB 2023a and executed on a PC equipped with a 3.40 GHz processor, 32 GB of RAM, and running Windows 10.

\begin{example}[Test of SOE approximation accuracy]
	In this example, we evaluate the SOE approximation \eqref{SOEappnew} for the Mittag-Leffler function \( E_{\alpha}(-t^{\alpha}) \) under two distinct scenarios:
	\begin{enumerate}
		\item[1)]   Varying the parameters \( l \) and \( q \) while keeping the fractional order \( \alpha \) and the tolerance error $\varepsilon$ fixed;
		\item[2)] Varying the tolerance error $\varepsilon$ while keeping \( \alpha \), \( l \), and \( q \) fixed.
	\end{enumerate}

According to \cref{approxerror},   the parameters \( q \) and \( l \) are required to satisfy 
\begin{equation}\label{ql-choice}
q>1,   \quad 1<l<\min\{1+\frac{2}{q}, q_1,q_2\},
\end{equation} 
with
	\(q_1 = \sqrt{5 - 4\cos((1-\alpha)\pi)}, \)
	\(q_2 = \frac{1}{q-1} \sqrt{(q+1)^2 - 4(q+1)\cos((1-\alpha)\pi) + 4}.\)
The values of $q_3:=\min\left\{1 + \frac{2}{q}, q_1, q_2\right\}$   with $\alpha=0.2, \ 0.5, \ 0.7$ and  $q=2, \ 8, \ 9, \ 10, \ 11$    are listed in \cref{rangeofl}, based on which we compute  the following cases:
  $(q=2, l=1.5), (q=8, l=1.1), (q=9, l=1.1), (q=10, l=1.1), (q=11, l=1.09 )$.
 \begin{table}[ht]
	\centering
	\caption{The values of $q_3$ with different $q$ and $\alpha$: $1<l<q_3$.}
	\label{rangeofl}
	\begin{tabular}{cccccc}
		\toprule
		 $q$ & 2 & 8 & 9 & 10 & 11 \\
		\midrule
		$\alpha=0.2$ & 2 & 1.25 & 1.2222 & 1.2 & 1.1818 \\
		$\alpha=0.5$ & 2 & 1.25 & 1.2222 & 1.2 & 1.1818 \\
		$\alpha=0.7$ & 1.6275 & 1.1414 & 1.1214 & 1.1063 & 1.0945 \\
		\bottomrule
	\end{tabular}
\end{table}

As mentioned in  \cref{selectJK}, 	for given  $\varepsilon$, $l$ and $q$ the number $N_{exp}=(K+1)J$   of the SOE approximation is  determined  by \eqref{KJ-choice}, i.e.
\begin{align*}
	K=\left\lceil\frac{|\log\varepsilon|}{\log q}\right\rceil,\qquad J=\left\lceil\frac{\log\left(\varepsilon^{-1}|\log\varepsilon|\right)}{2\log q\log l}\right\rceil.
\end{align*}
Table~\ref{taba} lists the results of \( N_{\text{exp}} \)   in  different cases. It is noteworthy that, under the same level of tolerance error, the case with \( (q = 10, \ l = 1.1) \) yields the smallest \( N_{\text{exp}} \).
\begin{table}[ht]
	\centering
	\small
	\caption{Values of \( N_{\text{exp}}=(K+1)J \) for different levels of tolerance error \( \varepsilon \) and different choices of $q,l$.}
	\label{taba}
	\begin{tabular}{cccc}
		\toprule
		\( q, l \) & \( \varepsilon=10^{-2} \) & \( \varepsilon=10^{-3} \) & \( \varepsilon=10^{-4} \) \\
		\midrule
		\( q=2, l=1.5 \) & 88($K=7,\ J=11$) & 176($K=16,\ J=16$) & 315($K=14,\ J=21$) \\
		\( q=8, l=1.1 \) & 64($K=3,\ J=16$) & 115($K=4,\ J=23$) & 174($K=5,\ J=29$) \\
		\( q=9, l=1.1 \) & 60($K=3,\ J=15$) & 110($K=4,\ J=22$) & 168($K=5,\ J=28$) \\
		*\ \( q=10, l=1.1 \) & 42($K=2,\ J=14$) & 84($K=3,\ J=21$) & 135($K=4,\ J=27$) \\
		\( q=11, l=1.09 \) & 45($K=2,\ J=15$) & 88($K=3,\ J=22$) & 140($K=4,\ J=28$) \\
		\bottomrule
	\end{tabular}
\end{table}

Numerical results of   the SOE approximation error $|R_{soe}(t)|$  in different cases are demonstrated in   \cref{fig1}. Note that by   \eqref{approxerrorE}   $R_{soe}(t)$ is of the form
$$R_{soe}(t)= E_{\alpha}(-t^\alpha)-\sum_{j=1}^{N_{exp}}b_je^{-a_jt},$$
and in our actual computation   the term $E_{\alpha}(-t^\alpha)$ is quantified by using the optimal parabolic contour algorithm  \cite{Garrappa2015Numerical}.  
\end{example}

From \cref{fig1} we have the following observations:%
\begin{itemize}
\item   
\cref{alpha02var001,alpha02var0001,alpha02var00001,alpha05var0001,alpha07var0001,samealphadiffvare}  plotted the results of  $R_{soe}(t)$  against $t$.  We can see that for   fixed $\alpha$  and tolerance error $\varepsilon$, the   obtained SOE approximation with different choices of $q$ and $l$ satisfying \eqref{ql-choice} is of the accuracy  $R_{soe}(t)=\mathcal O(\varepsilon)$. This is conformable to the theoretical prediction \eqref{SOEapp2} in \cref{approxerror}.

\item In particular,   the case with \( (q = 10, \ l = 1.1) \) has the smallest \( N_{\textit{exp}} \) among all the cases (cf. Table~\ref{taba}). As far as the complexity is concerned, this is the best choice of $q$ and $l$ in comparison.

\item \cref{lqnotconditionalpha0202,lqnotconditionalpha0203,lqnotconditionalpha0204} also give   results of $R_{soe}(t)$ in the case \( (q = 10, \ l = 2) \)  not satisfying  the condition \eqref{ql-choice}. We can see that  the approximation accuracy in this case is not as good as that in other cases.  

\item  \Cref{samealphadiffvare} shows   results of $R_{soe}(t)$ at different $\alpha$ and $\varepsilon$.
 In the relatively best case \( (q = 10, \ l = 1.1) \).  We can see that for each  \(\alpha\),    the smaller the  tolerance error \(\varepsilon\) becomes, the more accurate the SOE approximation will be. 

\item \Cref{logerror} demonstrates that the logarithmic error of the SOE approximation is proportional to \(|\log(\varepsilon)|\) when \(\alpha\) is fixed. We  can also observe  that   $R_{soe}(t)$ decreases over $t$. This indicates that the SOE approximation is particularly suitable  for long-time simulations in fractional viscoelastic models.

\end{itemize}

\begin{figure}[H]
	\centering 
	\subfigure[$\alpha=0.2$, $\varepsilon=10^{-2}$]{
		\includegraphics[width=.4\linewidth]{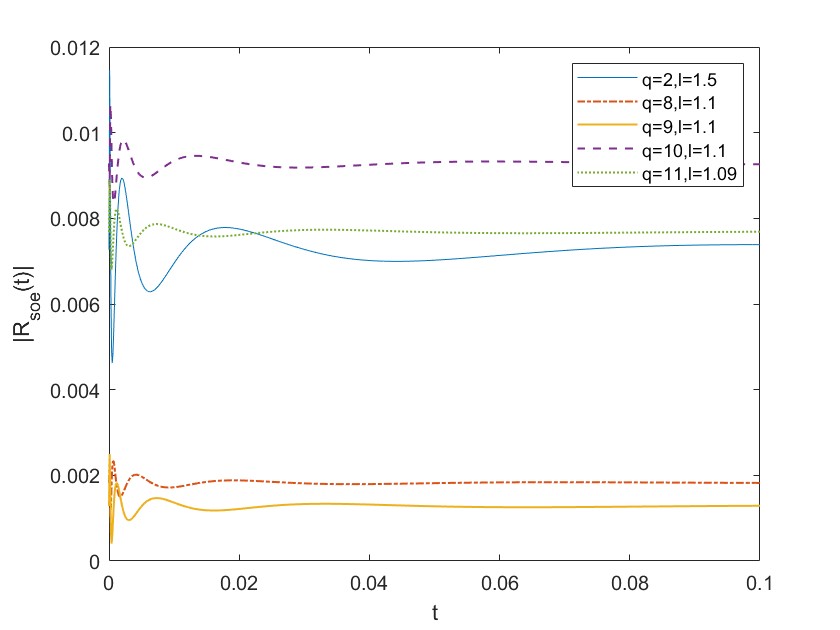}
		\label{alpha02var001}
	}
		\subfigure[$\alpha=0.2$, $\varepsilon=10^{-2}$]{
		\includegraphics[width=.4\linewidth]{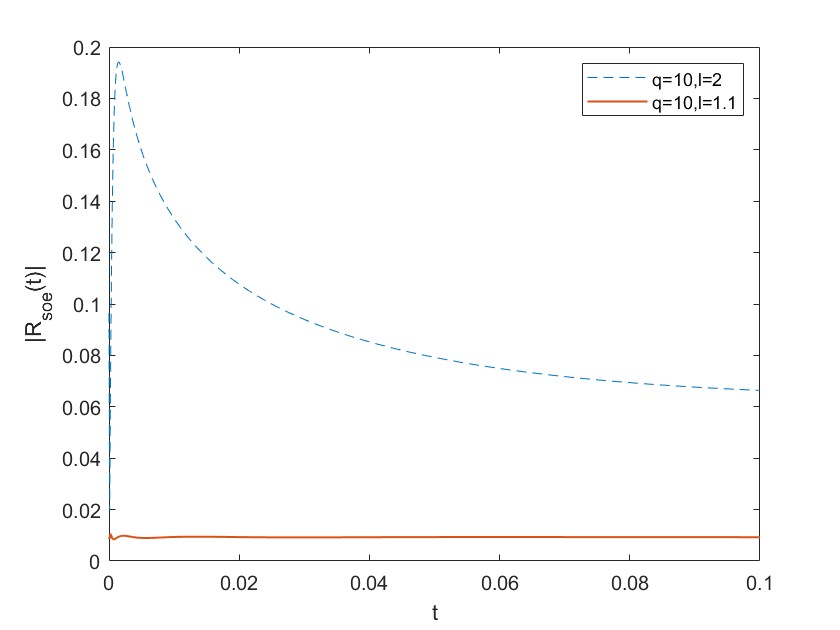}
		\label{lqnotconditionalpha0202}
	}\\
	\centering
	\subfigure[$\alpha=0.2$, $\varepsilon=10^{-3}$]{
		\includegraphics[width=.4\linewidth]{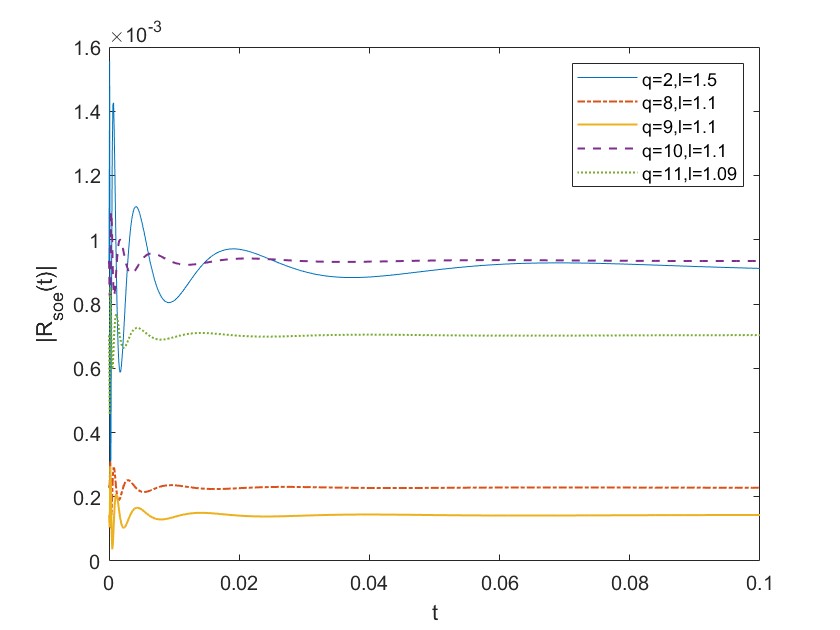}
		\label{alpha02var0001}
	}
	\subfigure[$\alpha=0.2$, $\varepsilon=10^{-3}$]{
		\includegraphics[width=.4\linewidth]{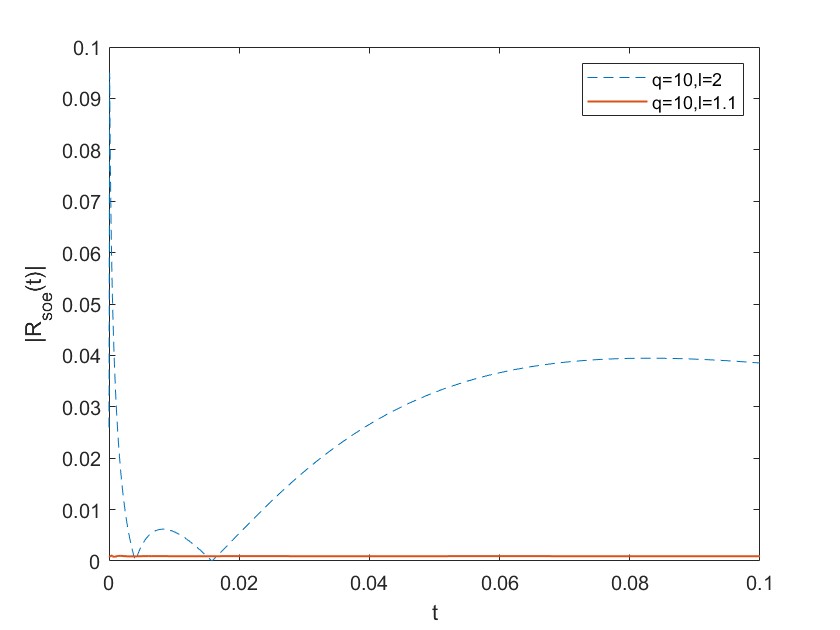}
		\label{lqnotconditionalpha0203}
	}\\
	\centering
	\subfigure[$\alpha=0.2$, $\varepsilon=10^{-4}$]{
		\includegraphics[width=.4\linewidth]{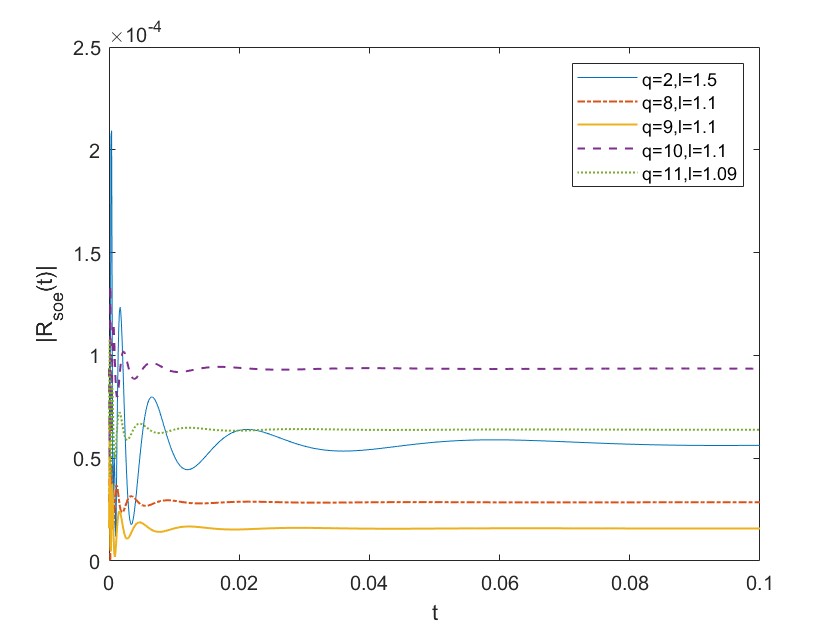}
		\label{alpha02var00001}
	}
	\subfigure[$\alpha=0.2$, $\varepsilon=10^{-4}$]{
		\includegraphics[width=.4\linewidth]{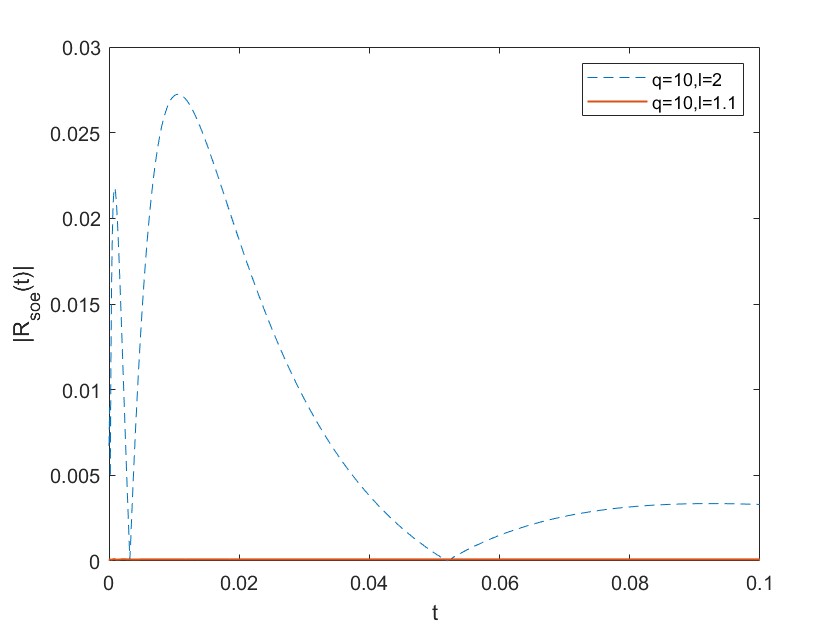}
		\label{lqnotconditionalpha0204}
	}
	\\
	\centering
	\subfigure[$\alpha=0.5$, $\varepsilon=10^{-3}$]{
		\includegraphics[width=.4\linewidth]{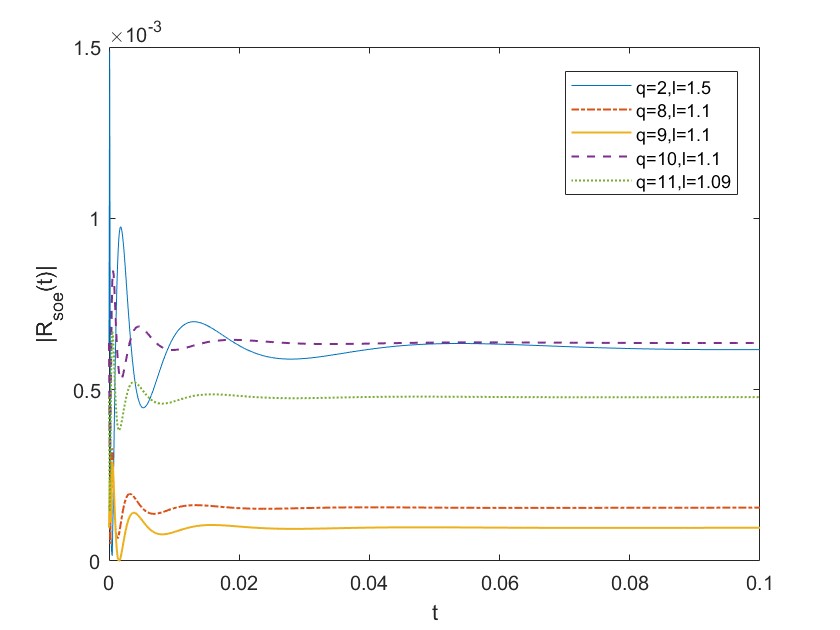}
		\label{alpha05var0001}
	}
	\subfigure[$\alpha=0.7$, $\varepsilon=10^{-3}$]{
		\includegraphics[width=.4\linewidth]{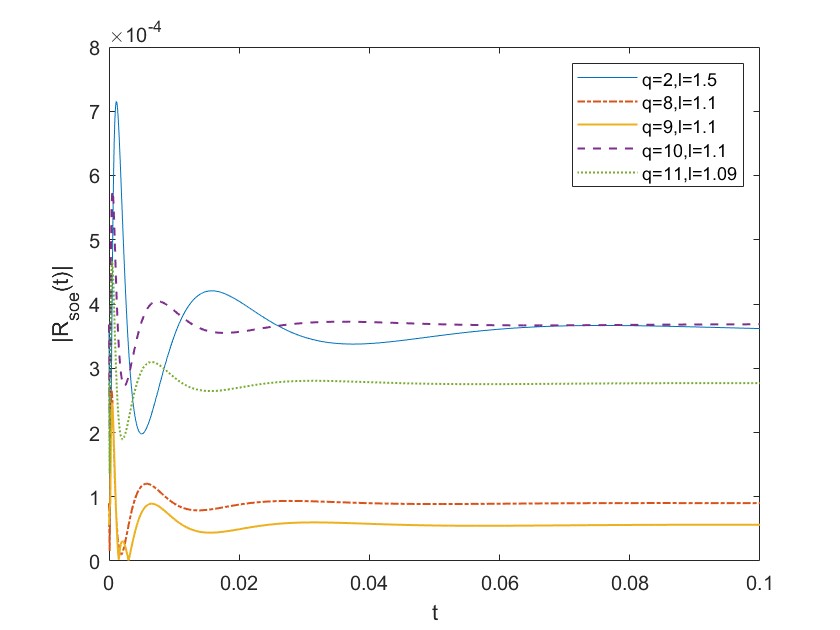}
		\label{alpha07var0001}
	}
	\subfigure[  $q=10$, $l=1.1$]{
		\includegraphics[width=.4\linewidth]{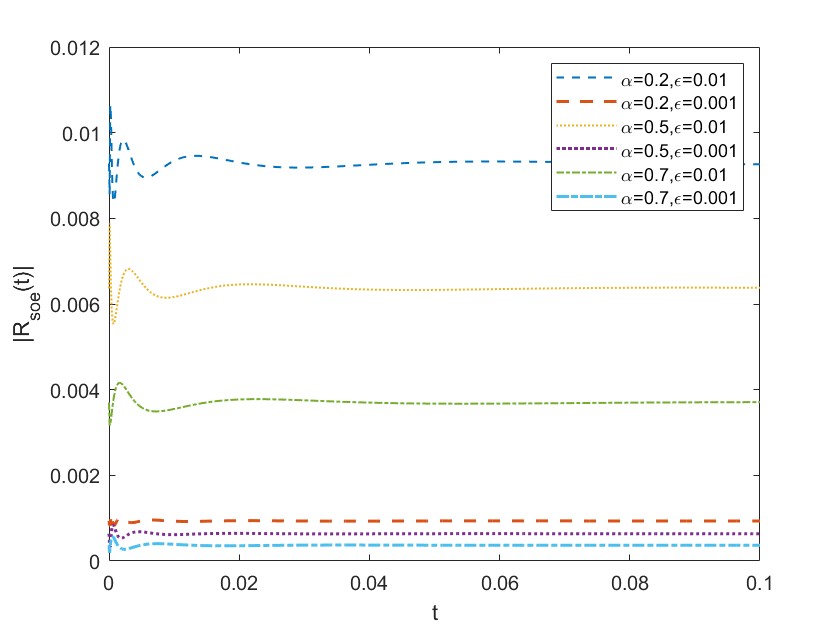}
		\label{samealphadiffvare}
	}
	\subfigure[$\alpha=0.5$, $q=10$, $l=1.1$]{
		\includegraphics[width=.4\linewidth]{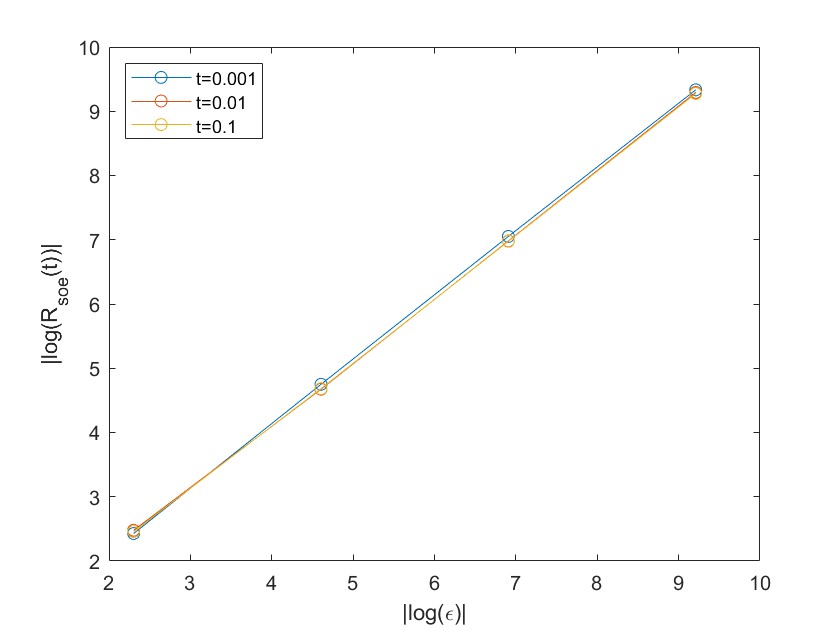}
		\label{logerror}
             }
\caption{ Results of SOE approximation error $R_{soe}(t)$   for   $E_{\alpha}(-t^{\alpha})$. 
    }\label{fig1}	
\end{figure}

\begin{example}[Efficiency test of fast scheme]    In the model problem \cref{model}, we take  \(\Omega = [0, 1] \times [0, 1]\),  \(T = 1\), \(\alpha = 0.5\), \(\tau_{\sigma} = 1\), and \(\tau_{\varepsilon} = 1\).  The elastic medium is assumed to be isotropic, with material properties \(\rho = 1\), \(\mu = 1\), and \(\lambda = 1\), and the exact displacement field $\rm{\bf{u}}(\mathit{x,y,t})$ of the model is also assumed to take the form 
	\rm{\begin{equation*}
			\textbf{u}(x,y,t)=\left(
			\begin{array}{l}
				e^{-t}(x^2-x)^2(4y^3-6y^2+2y)\\
				-e^{-t}(y^2-y)^2(4x^3-6x^2+2x)
			\end{array}
			\right).
	\end{equation*}}

In \cref{algo2} we use  $\frac{1}{h}\times \frac{1}{h}$ square meshes and $N$ uniform grids for   the spatial domain $\Omega$ and the time region $[0,T]$. For the spatial discretization, we apply the Hu-Man-Zhang rectangular element \cite{HuManZhang} spaces, i.e. 
\begin{align*}
 &\uuline{\mathrm{H}_h}=\left\{\chi\in\uuline{\mathrm{H}}(\mathrm{\bf{div}},\Omega,S); \ \chi_{11}\in P_{2,0}(T),\chi_{22}\in P_{0,2}(T), 
 \chi_{12}\in Q_1(T) \  \forall \, T\in\mathcal{T}_h \right\}, \\
 &\uline{\mathrm{V}_h}=\left\{w\in\uline{L}^2(\Omega);\  w_1\in P_{1,0}(T), w_2\in P_{0,1}(T) \  \forall \, T\in\mathcal{T}_h\right\}.
\end{align*}
The local nodal degrees of freedom for the stress tensor  $\tau$ are shown in \cref{Hu-Man-Zhang}.

\begin{figure}[h]
 \centering
 \includegraphics[width=11.5cm,height=4.5cm]{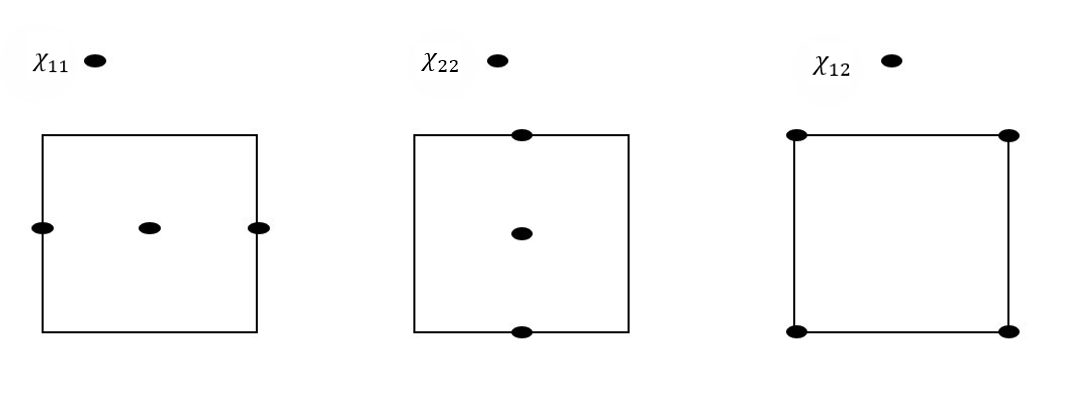}
 \caption{Nodal degrees of freedom for Hu-Man-Zhang's element}
 \label{Hu-Man-Zhang}
\end{figure}
   In the SOE approximation, we set \(q = 10\), \(l = 1.1\), and \(\varepsilon = 10^{-3}\). Numerical results of the error  
\begin{align*}
		||U-\textbf{u}||_{l^\infty}:=\mathop{\mathrm{max}}_{1\leq n\leq N}||U(t_{n})-\textbf{u}(t_{n})||_{L^2(\Omega)},
\end{align*}
the memory cost of the history variables  $H_n$ in \cref{algo1} and $G_j$ in \cref{algo2}, and the wall time of the total runtime of the algorithms are given in Tables \ref{table5}, \ref{table6} and \ref{table7} (All timings are measured in wall-clock seconds using MATLAB's tic/toc mechanism). Here, the wall time does not include   the time required   to solve the linear system.
	\end{example}
	
	From Tables   \ref{table5}, \ref{table6} and  \ref{table7} we can see that when the spatial mesh is fixed, the errors of   the L1-Newmark MFE scheme and the fast scheme   are close. However, the wall time and the memory (Mem) cost of the fast scheme are much less than those of the L1-Newmark MFE scheme. In particular, the memory cost of the fast algorithm remains almost constant as the time step size becomes smaller. This is because the fast algorithm  only requires $N_{exp}$ pieces of information from the $(N-1)$-th level (cf. (\ref{Gin})) to compute the $N$-th time level, whereas the L1-Newmark MFE scheme needs all historical information from the previous $N-1$ time levels (cf. (\ref{His})).  Figure \ref{difftimeandmemory} gives   results of  the wall time and the memory cost of the two algorithms at different time steps when $h=\frac{1}{8}$, showing  that the fast scheme performs excellently.


\begin{table}[H]
	\centering
	\caption{\label{table5}Results of wall time, memory cost and error $||U-\rm{\bf{u}}||_{\textit{l}^\infty}$ for $h=\frac{1}{8}$.}
	\footnotesize 
	\setlength{\tabcolsep}{0.9mm} 
	\sisetup{
		round-mode=places,
		round-precision=7,
		output-exponent-marker = \text{e},
		scientific-notation=true
	}
	\begin{tabular}{
			@{} 
			c 
			c 
			c
			S[table-format=1.7e-2,scientific-notation=true] 
			c
			c
			S[table-format=1.7e-2,scientific-notation=true] 
			@{}
		}
		\toprule
		\multirow{2}{*}{$\triangle t$} & \multicolumn{3}{c}{L1-Newmark}&\multicolumn{3}{c}{Fast Scheme}\\
		\cmidrule(r){2-4} \cmidrule(r){5-7}
		& time (s) & Mem (MB) & {$||U-\textbf{u}||_{l^\infty}$} & time  (s) &  Mem (MB) & {$||U-\textbf{u}||_{l^\infty}$} \\
		\midrule
		$0.01$&0.062&0.411&0.001816104488224&0.002&0.086&0.001815976306702 \\
		$0.005$&0.130&0.821&0.001815941310896&0.004&0.086&0.001815867407206 \\
		$0.001$&0.976&4.098&0.001815957293518&0.026&0.086&0.001815934476067 \\
		$0.0005$&2.777&8.194&0.001815952263475&0.043&0.086&0.001815936540757 \\
		$0.0001$&49.261&40.962&0.001815946841198&0.212&0.086&0.001815937119515 \\
		\bottomrule
	\end{tabular}
\end{table}


\begin{table}[H]
	\centering
	\caption{\label{table6}Results of wall time, memory cost  and error $||U-\rm{\bf{u}}||_{\textit{l}^\infty}$ for  $h=\frac{1}{16}$.}
	\footnotesize 
	\setlength{\tabcolsep}{0.9mm} 
	\sisetup{
		round-mode=places,
		round-precision=7,
		output-exponent-marker = \text{e},
		scientific-notation=true
	}
	\begin{tabular}{
			@{} 
			c 
			c 
			c
			S[table-format=1.7e-2,scientific-notation=true] 
			c
			c
			S[table-format=1.7e-2,scientific-notation=true] 
			@{}
		}
		\toprule
		\multirow{2}{*}{$\triangle t$} & \multicolumn{3}{c}{L1-Newmark}&\multicolumn{3}{c}{Fast Scheme}\\
		\cmidrule(r){2-4} \cmidrule(r){5-7}
		& time (s) & Mem (MB) & {$||U-\textbf{u}||_{l^\infty}$} & time  (s) &  Mem (MB) & {$||U-\textbf{u}||_{l^\infty}$} \\
		\midrule
		$0.01$&0.351&1.647&0.001308384107714&0.034&0.344&0.001308333701266 \\
		$0.005$&0.742&3.285&0.001308378603218&0.067&0.344&0.001308349533648 \\
		$0.001$&5.264&16.392&0.001308429163928&0.343&0.344&0.001308419937633 \\
		$0.0005$&13.839&32.776&0.001308424482868&0.666&0.344&0.001308418084284 \\
		$0.0001$&216.673&163.848&0.001308421273700&3.276&0.344&0.001308417222615\\
		\bottomrule
	\end{tabular}
\end{table}


\begin{table}[H]
	\centering
	\caption{\label{table7}Results of wall time, memory cost  and error $||U-\rm{\bf{u}}||_{\textit{l}^\infty}$ for $h=\frac{1}{32}$.}
	\footnotesize 
	\setlength{\tabcolsep}{0.9mm} 
	\sisetup{
		round-mode=places,
		round-precision=7,
		output-exponent-marker = \text{e},
		scientific-notation=true
	}
	\begin{tabular}{
			@{} 
			c 
			c 
			c
			S[table-format=1.7e-2,scientific-notation=true] 
			c
			c
			S[table-format=1.7e-2,scientific-notation=true] 
			@{}
		}
		\toprule
		\multirow{2}{*}{$\triangle t$} & \multicolumn{3}{c}{L1-Newmark}&\multicolumn{3}{c}{Fast Scheme}\\
		\cmidrule(r){2-4} \cmidrule(r){5-7}
		& time (s) & Mem (MB) & {$||U-\textbf{u}||_{l^\infty}$} & time  (s) &  Mem (MB) & {$||U-\textbf{u}||_{l^\infty}$} \\
		\midrule
		$0.01$&2.824&6.586&0.001140568108041&1.153&1.376&0.001140552874718 \\
		$0.005$&5.688&13.141&0.001140580833534&2.152&1.376&0.001140572259132 \\
		$0.001$&33.486&65.569&0.001140729922473&10.728&1.376&0.001140727198145 \\
		$0.0005$&82.683&131.114&0.001140696786343&21.532&1.376&0.001140694892847 \\
		$0.0001$&1003.425&655.387&0.001140686649151&107.819&1.376&0.001140685770132\\
		\bottomrule
	\end{tabular}
\end{table}

\begin{figure}[H]
	\centering 
	\subfigure{
		\includegraphics[width=.4\linewidth]{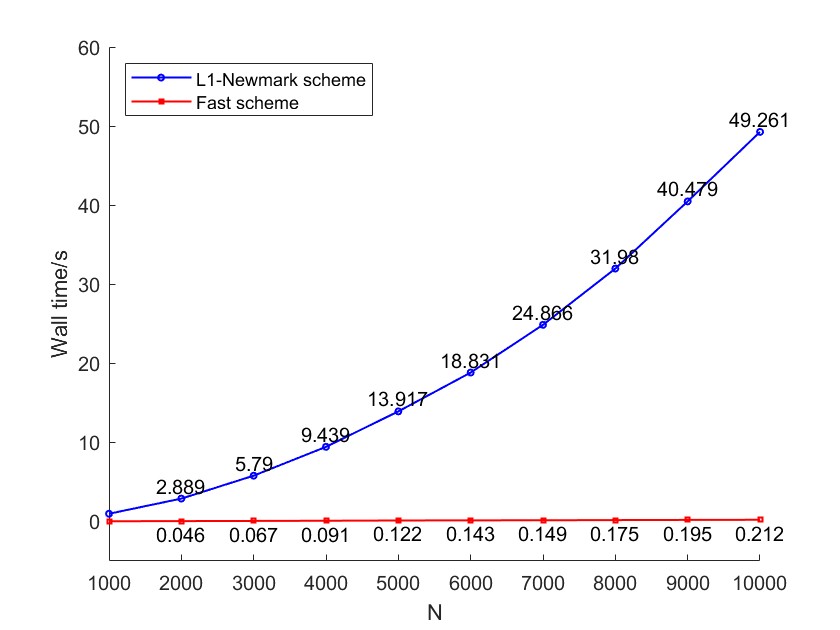}
		\label{difftime}
	}
	\subfigure{
		\includegraphics[width=.4\linewidth]{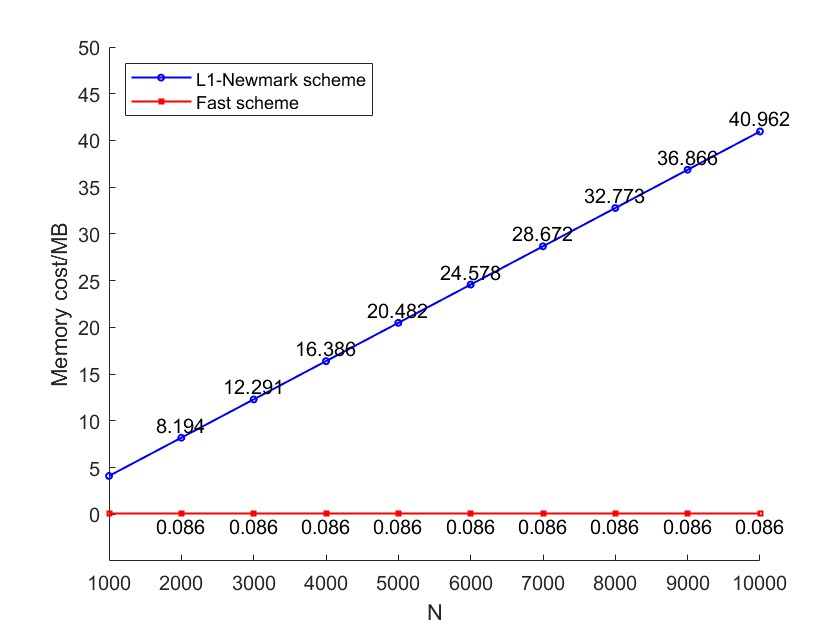}
		\label{diffmemory}
	}
\caption{ A comparison of wall time and memory cost between the two algorithms for different time steps when $h=\frac{1}{8}$ }
\label{difftimeandmemory}	
\end{figure}

\section{Conclusions}
We have proposed an efficient SOE approximation for the Mittag-Leffler function, with a rigorous error analysis, 
 and applied it to develop   a fast fully discrete mixed finite element scheme for   the fractional viscoelastic wave propagation model.   Numerical experiments have demonstrated the excellent performance of the developed  fast scheme  in storage and computing efficiency.

\bibliographystyle{plain}
\normalem
\bibliography{fraviscoelastic}

\end{document}